\documentclass{amsart}
\usepackage{tikz-cd}
\usepackage{centernot}
\usepackage{xypic}
\usepackage{amssymb}
\usepackage{multirow}

\newcommand\BIT{\begin{enumerate}}
\newcommand\EIT{\end{enumerate}}

%% A1 Theorems
\newtheorem{thm}{Theorem}[section] %the resolution could also be [subsection]

\newtheorem{cor}[thm]{Corollary}

\newtheorem{exmpl}[thm]{Example}
\newtheorem{lem}[thm]{Lemma}
\newtheorem*{lem*}{Lemma}
\newtheorem{prop}[thm]{Proposition}

\newtheorem{rem}[thm]{Remark}

\newcommand{\Cref}[1]{{Corollary~\ref{#1}}}

 % Endomorphism ring
 % Endomorphism ring

 % Was \, \, (up to ver. 7.16)

 % (choose which absolute value is referred to)
\long\def\forget#1\forgotten{{}}

\def\Span{{\operatorname{Span}}}
\def\Ker{{\operatorname{Ker}}}

\begin{document}

% Enter full title and short title for running headers

% \shorttitle{Accepted for publication in Int. Math. Res. Not. (2022)}

\title[Amenable algebras and minimal subshifts]{Amenability of monomial algebras, minimal subshifts and free subalgebras}

% Enter the publication year and the ID number of the paper

\author{Jason P. Bell}
\address{Department of Mathematics, University of Waterloo, Waterloo, ON, N2L 3G1,
Canada}
\email{jpbell@uwaterloo.ca}

\author{Be'eri Greenfeld}
\address{Department of Mathematics, University of California, San Diego, La Jolla, CA, 92093, USA}
\email{bgreenfeld@ucsd.edu}

\begin{abstract}
    We give a combinatorial characterization of amenability of monomial algebras and prove the existence of monomial Følner sequences, answering a question due to Ceccherini-Silberstein and Samet-Vaillant. We then use our characterization to prove that over projectively simple monomial algebras, every module is exhaustively amenable; we conclude that convolution algebras of minimal subshifts admit the same property. We deduce that any minimal subshift of positive entropy gives rise to a graded algebra which does not satisfy an extension of Vershik's conjecture on amenable groups, proposed by Bartholdi. Finally, we show that non-amenable monomial algebras must contain noncommutative free subalgebras. Examples are given to emphasize the sharpness and necessity of the assumptions in our results.
\end{abstract}

\maketitle

%%%%%%%%%

\section{Introduction}

Amenability of discrete algebras has been extensively studied and connections to amenable groups, C*-algebras, dynamical systems, large-scale geometry, soficity and noncommutative algebra have led to many natural and interesting results in this vein. For a fair sample of the diverse spectrum of works, see Arzhantseva-Păunescu \cite{ArzhPau}, Bartholdi \cite{Bartholdi_book,Bartholdi_group algebra}, Ceccherini-Silberstein and Samet-Vaillant \cite{CSSV_JMS,CSSV_Expo}, Elek \cite{Elek}, Gromov \cite{Gro,Gromov}, 
and Ara-Li-Lied\'o-Wu \cite{Ara_BMS}.

We recall that for an algebra $A$ over a field $K$, a right $A$-module $M$ is \emph{amenable} if for every finite-dimensional subspace $V\leq A$ and every $\varepsilon>0$ there exists a finite-dimensional subspace $L\leq M$ (depending on $V$ and $\varepsilon$) such that
\begin{equation} \dim_K LV < (1+\varepsilon) \dim_K L.\end{equation}
In such a case, we say that $L$ is a $(V,\varepsilon)$-invariant subspace. 
Similarly, we say that $M$ is \emph{exhaustively amenable} if for each finite-dimensional subspace $V\leq A$, $\varepsilon>0$, and finite-dimensional subspace $W\leq M$ there exists a $(V,\varepsilon)$-invariant subspace of $M$ containing $W$.
The notions of amenable and exhaustively amenable for left modules are defined analogously.

It is worth making the remark that an infinite-dimensional $A$-module $M$ is exhaustively amenable if and only if for any finite-dimensional subspace $V\leq A$ and $\varepsilon>0$, $M$ has $(V,\varepsilon)$-invariant subspaces of arbitrarily large dimensions. An affine (that is, finitely generated over its base field) algebra $A=K\left<V\right>$ is exhaustively amenable as a module over itself if and only if it has a Følner sequence, namely a chain of finite-dimensional $K$-subspaces $L_1\subseteq L_2\subseteq\cdots$ such that $A=\bigcup_{n=1}^{\infty} L_n$ and $\dim_K L_nV/\dim_K L_n\rightarrow~1
$ as $n\to\infty$ (see \cite[Theorem~3.4]{CSSV_Expo}).

% \begin{rem} The common definition of an amenable algebra in the literature is an algebra which is amenable, or exhaustively amenable as a module over itself. Bartholdi defines an amenable algebra to be an algebra over which every module is amenable; the latter definition has some significant advantages. To avoid confusion, we will not use the term `amenable algebra' without explicitly mentioning the relevant property. \end{rem}

Bartholdi \cite{Bartholdi_group algebra} proved that a group $G$ is amenable if and only if its group algebra $K[G]$ is an amenable module over itself, if and only if every module over it is exhaustively amenable (for more on the equivalence of the various notions for group algebras, see \cite{Bartholdi_book}). A conjecture of Vershik \cite{Vershik} asserts that if $G$ is a finitely generated amenable group then the growth of the associated graded algebra $\bigoplus_{n=0}^{\infty} \varpi^n/\varpi^{n+1}$ is subexponential. Bartholdi \cite[Question~1.7]{Bartholdi2} (see also \cite{Bartholdi_talk}) has extended this conjecture to the setting of affine augmented algebras. He conjectured that if $A$ is an affine amenable algebra\footnote{The common definition of an amenable algebra in the literature is an algebra which is amenable, or exhaustively amenable as a module over itself. Bartholdi defines an amenable algebra to be an algebra over which every module is amenable; the latter definition has some significant advantages. To avoid confusion, from now on we will not use the term `amenable algebra' without explicitly mentioning the relevant property when it is clear from the context.} 
equipped with a homomorphism $\varepsilon\colon A\rightarrow K$ (called the augmentation map) then the associated graded algebra $\bigoplus_{n=0}^{\infty} \varpi^n/\varpi^{n+1}$, where $ \varpi=\Ker ( \varepsilon ) $, has subexponential growth.

The main algebraic objects of interest in this paper are monomial algebras; these are affine algebras, given by a presentation consisting solely of monomial relations. Monomial algebras are tightly connected to convolution algebras of \'etale groupoids arising from subshifts \cite{Nekrashevych}.

Our first main result is that algebras associated to minimal subshifts have the property that every module is exhaustively amenable.  (See \S\ref{sec:subshift} for relevant definitions concerning subshifts and associated algebras.) We recall that given a subshift $X$, one can naturally associate a monomial algebra $A_X$ that is spanned by the images of finite factors of $X$ and with multiplication given by concatenation (where a product of factors of $X$ which does not appear as a factor of $X$ is set to be $0$).  We then have the following result concerning amenability of algebras associated to minimal subshifts.
\begin{thm} \label{minimal amenable}
If $X$ is a minimal subshift then every (left or right) $A_X$-module is exhaustively amenable.
\end{thm}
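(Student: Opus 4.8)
The plan is to reduce the assertion to a Følner-type condition for the subspace of generators and then to exploit the rigid ideal structure that minimality imposes on $A_X$. Write $L(X)$ for the language of $X$, so that the nonzero monomials of $A_X$ are exactly the factors of $X$, graded by length, and put $V_1 = \Span\{a : a \text{ a letter of } X\}$ for the span of the generators. Since $A_X$ is generated by $V_1$, every finite-dimensional $V \le A_X$ lies in $V_1 + V_1^2 + \cdots + V_1^d$ for some $d$; a standard telescoping estimate (iterating $\dim (MV_1)V_1 \le (1+\delta)\dim MV_1$ along a Følner sequence) then shows that it is enough to produce, for every $\delta > 0$ and every finite-dimensional $W$, arbitrarily large $(V_1,\delta)$-invariant subspaces containing $W$. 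By the remark in the introduction this is precisely exhaustive amenability, and the left-module case is the mirror image (replace right extensions by left extensions everywhere). Thus the problem becomes combinatorial: find large subspaces whose one-letter right-extensions barely increase the dimension.

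The structural input from minimality is that $A_X$ is \emph{projectively simple}, i.e. every nonzero graded two-sided ideal has finite codimension. Indeed, graded ideals of a monomial algebra are spanned by monomials, so a nonzero such ideal contains some factor $w$; by \emph{uniform recurrence} (the combinatorial form of minimality) there is an $N$ with the property that every factor of length at least $N$ contains $w$, hence lies in the ideal, and the quotient is spanned by the finitely many factors of length $<N$. This is the feature that will let us absorb boundaries: any finite set of unwanted extension monomials generates an ideal of finite codimension, so it meets each sufficiently deep layer of $L(X)$ in a controlled way. The same recurrence also gives clause (i) below, namely that any prescribed $W$ is automatically contained in the subspaces we build.

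The Følner construction is the heart of the matter, and I expect it to be the main obstacle, precisely because of positive entropy. For a \emph{monomial} subspace $L = \Span(S)$ the right-extension is transparent: distinct products $sa$ never collide, so $\dim \sum_a La = \sum_{s \in S}\deg^+(s)$ and the invariance ratio is exactly the average right-degree over $S$. The naive length-graded choice fails, since for $S = \{w : |w| \le n\}$ the ratio is governed by $p_X(n+1)/\sum_{k \le n} p_X(k)$, which stays bounded away from $1$ when $X$ has positive entropy (so $p_X(n)\sim \lambda^n$ with $\lambda>1$); the same exponential obstruction defeats ``all factors of a single long window.'' Monomial sets can reach ratio $1$ only when $X$ carries a positive density of non-right-special factors, which need not happen. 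The decisive move is therefore to pass to \emph{non-monomial} subspaces and to argue one Rauzy layer at a time: on the length-$\ell$ layer one seeks a large subspace $L \le (A_X)_\ell$ with $\dim \sum_a La$ close to $\dim L$, a statement about the one-letter overlap combinatorics (the Rauzy graph) of $X$. This is exactly the content of the combinatorial characterization of amenability of monomial algebras, and I expect establishing the requisite uniform almost-invariance of the layers in the positive-entropy regime to be the genuinely hard step.

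Finally, with a Følner sequence for $V_1$ in hand, the passage from ``exhaustively amenable over itself'' to ``every module is exhaustively amenable'' uses projective simplicity once more: over a projectively simple monomial algebra every nonzero submodule is large, so the algebra's own invariant subspaces can be transported to an arbitrary module $M$ by lifting the Følner sequence through a presentation of $M$ and correcting the resulting error by the finite-codimension property of the ideals involved. Combining the reductions of the first paragraph, the projective simplicity of the second, the branching/Rauzy-graph Følner construction of the third, and this transport step yields that every left or right $A_X$-module is exhaustively amenable, with the density control of right-special factors for positive-entropy minimal subshifts being the one place where substantial new input is required.
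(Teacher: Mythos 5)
Your proposal is a plan rather than a proof: the two steps you yourself flag as the hard ones are exactly where the content of the theorem lies, and in both places the claims guiding your plan are false. Concerning the Følner construction, you assert that monomial subspaces ``can reach ratio $1$ only when $X$ carries a positive density of non-right-special factors, which need not happen,'' and on that basis you abandon monomial subspaces for an unproven layer-by-layer Rauzy-graph argument. The necessity claim is mistaken: an almost-invariant monomial set need not be a union of layers or have any density in the language --- a single \emph{chain} suffices. The paper's key combinatorial input (Lemma~\ref{combLemma}) is that in a uniformly recurrent word, for every $D$ there is a factor $u$ with a \emph{unique} length-$D$ right prolongation $ux_{i_1}\cdots x_{i_D}$; the monomial subspace $L=\Span_K\{u,ux_{i_1},\dots,ux_{i_1}\cdots x_{i_N}\}$ then has boundary of dimension at most $R$ (the degree bound of $V$) regardless of entropy, since right multiplication by any monomial of length at most $R$ only moves forward along the unique prolongation path. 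Moreover, Theorem~\ref{mon_amenable} shows that for prolongable monomial algebras, (exhaustive) amenability over itself is \emph{equivalent} to the existence of monomial Følner sequences, so passing to non-monomial subspaces cannot gain anything. The positive-entropy difficulty you point to is genuine but is resolved precisely by Lemma~\ref{combLemma}, which has no counterpart in your proposal.

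The second gap is the passage to arbitrary modules. Your transport argument rests on the claim that over a projectively simple algebra ``every nonzero submodule is large''; this is false --- $A_X$ has nonzero finite-dimensional modules (e.g.\ the trivial module $A_X/(A_X)_{\geq 1}$), and cyclic submodules of an arbitrary module can be finite-dimensional. Relatedly, Følner subspaces do not survive presentations: if $M=A_X/J$, the image in $M$ of a Følner subspace of $A_X$ can collapse to zero, and in general exhaustive amenability of an algebra over itself does \emph{not} imply that all its modules are exhaustively amenable --- that implication is exactly what makes Theorem~\ref{minimal amenable} nontrivial. The paper's mechanism is a dichotomy run inside $M$ itself, not a transport from $A_X$: writing every length-$N$ monomial as $v_0ux_{i_1}\cdots x_{i_D}v'$ (possible by minimality), either some $\xi\in M$ keeps a chain $\xi\cdot v_0u,\ \xi\cdot v_0ux_{i_1},\dots,\xi\cdot v_0ux_{i_1}\cdots x_{i_C}$ linearly independent, in which case its span is $(V,\varepsilon)$-invariant by the unique-prolongation property; or every such chain collapses, in which case one shows $\xi\cdot A_N\subseteq \sum_{i<N}\xi\cdot A_i$ for every $\xi$, so every cyclic submodule is finite-dimensional and is itself an invariant subspace. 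Neither horn of this dichotomy appears in your proposal, and ``correcting the error by finite codimension'' cannot substitute for it.
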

Equivalently, this says that for a projectively simple monomial algebra, every (left or right) module is exhaustively amenable. Consequently, the monomial algebra spanned by the finite factors of every minimal subshift of positive entropy is a graded algebra of exponential growth all of whose modules are exhaustively amenable. This shows that Vershik's conjecture does not carry over to the setting of arbitrary augmented algebras (even under the strictest notion of amenability for algebras), answering Bartholdi's question.

Projectively simple monomial algebras can be viewed as certain subrings of convolution algebras of minimal subshifts, and we obtain results about the amenability of modules over the convolution algebra of the groupoid of the action of a minimal subshift (see \S\ref{sec:convo} for relevant definitions).
\begin{thm} \label{convo}
Let $X$ be a transitive aperiodic subshift. If $A_X$ is exhaustively amenable as a module over itself, then the convolution algebra, $K[\mathfrak{G}_X]$, of the groupoid %%%%%
of action is exhaustively amenable as a module over itself. If, moreover, $X$ is a minimal subshift, then every $K[\mathfrak{G}_X]$-module is exhaustively amenable.
\end{thm}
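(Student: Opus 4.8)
The plan is to exploit the concrete realization of $K[\mathfrak{G}_X]$ as a groupoid convolution algebra that carries $A_X$ as its ``forward'' subalgebra and is graded by $\mathbb{Z}$ with a locally finite degree-zero part. First I would fix a spanning set of $K[\mathfrak{G}_X]$ by compact open bisections $\theta_{\alpha,\beta}$, indexed by pairs of factors $\alpha,\beta$ of $X$ admitting a common continuation, obeying the Leavitt/Cuntz--Krieger-type relations (up to the operator-composition convention) $\theta_{\alpha,\beta}\theta_{\gamma,\delta}=\theta_{\alpha\gamma',\delta}$ when $\gamma=\beta\gamma'$, $\theta_{\alpha,\beta}\theta_{\gamma,\delta}=\theta_{\alpha,\delta\beta'}$ when $\beta=\gamma\beta'$, with all products over incomparable middle words vanishing, together with $\sum_{e}\theta_{\alpha e,\beta e}=\theta_{\alpha,\beta}$ summed over admissible one-letter extensions $e$. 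Writing $\theta_{\alpha,\beta}=\alpha\beta^{*}$, the map sending a factor $\alpha$ to the element with empty $\beta$ identifies $A_X$ with the subalgebra spanned by those $\alpha\beta^{*}$ with $\beta$ empty, and $K[\mathfrak{G}_X]$ becomes $\mathbb{Z}$-graded via $\deg(\alpha\beta^{*})=|\alpha|-|\beta|$. The crucial structural input is that the degree-zero part $\mathcal{F}=\operatorname{Span}\{\alpha\beta^{*}:|\alpha|=|\beta|\}$ is a nested union $\mathcal{F}=\bigcup_{m}\mathcal{F}_{m}$ of the finite-dimensional subalgebras $\mathcal{F}_{m}=\operatorname{Span}\{\alpha\beta^{*}:|\alpha|=|\beta|=m\}$, the inclusions $\mathcal{F}_{m}\subseteq\mathcal{F}_{m+1}$ being furnished by the Cuntz--Krieger relation; consequently every $\mathcal{F}$-module is automatically exhaustively amenable (for instance, the subspaces $W\mathcal{F}_{m}$ with $m$ large are invariant under any fixed finite-dimensional subspace, since $\mathcal{F}_{m}$ is a subalgebra). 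Thus all exponential growth of $K[\mathfrak{G}_X]$, and hence every obstruction to amenability, is concentrated in the nonzero-degree (genuinely forward/backward) directions, which is exactly what the $A_X$-hypothesis controls.

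For the first assertion I would take a Følner sequence $\{L_n\}$ of $A_X$, available since $A_X$ is affine and exhaustively amenable over itself \cite{CSSV_Expo}, with each $L_n$ spanned by factors, and set $\tilde L_n=\mathcal{F}_{m(n)}\,L_n\,\mathcal{F}_{m(n)}$ for a level $m(n)\to\infty$. Given a finite-dimensional $V'\le K[\mathfrak{G}_X]$, after enlarging I may assume $V'\subseteq\bigoplus_{|k|\le d}K[\mathfrak{G}_X]_{k}$ with degree-zero part inside $\mathcal{F}_{d}$. Once $m(n)\ge d$ this degree-zero part lies in $\mathcal{F}_{m(n)}$ and so contributes no growth, as $\mathcal{F}_{m(n)}$ is a subalgebra; the nonzero-degree components shift factors forward or backward by at most $d$ and, after Cuntz--Krieger straightening, are absorbed into $L_{n}V$ times short tails, where $V\le A_X$ is a fixed finite-dimensional space assembled from $V'$. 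The Følner estimate $\dim L_{n}V<(1+\varepsilon)\dim L_{n}$ then upgrades to $\dim\tilde L_{n}V'<(1+\varepsilon')\dim\tilde L_{n}$ with $\varepsilon'\to\varepsilon$, so $\{\tilde L_{n}\}$ is a Følner sequence and $K[\mathfrak{G}_X]$ is exhaustively amenable over itself.

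For the final assertion, assume $X$ minimal and let $M$ be any $K[\mathfrak{G}_X]$-module. Restricting scalars along $A_X\hookrightarrow K[\mathfrak{G}_X]$ and invoking \Tref{minimal amenable}, $M$ is exhaustively amenable as an $A_X$-module. I would then upgrade this to exhaustive amenability over $K[\mathfrak{G}_X]$ by the same separation of directions: given finite-dimensional $V'\le K[\mathfrak{G}_X]$ and $W\le M$, first use $A_X$-exhaustive amenability on the forward part to produce a large $(V,\varepsilon)$-invariant subspace $L\supseteq W$ for a suitable $V\le A_X$, then enlarge $L$ to be $\mathcal{F}_{m}$-stable for large $m$, and verify that the degree-zero (AF) and backward (ghost) actions of $V'$ preserve it up to negligible dimension, using that $\mathcal{F}_{m}$ acts through its finite-dimensional subalgebra structure. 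The left-module case is symmetric, using that $A_X^{\mathrm{op}}$ is again a subshift algebra.

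The main obstacle is precisely the dimension bookkeeping of the second and third paragraphs: reconciling the two distinct ``boundaries'' of the invariant subspaces, namely the forward boundary measured by the $A_X$-Følner ratio $\dim L_{n}V/\dim L_{n}$, and the AF-level boundary arising from $\mathcal{F}_{m}\subseteq\mathcal{F}_{m+d}$, whose naive dimension ratio stays bounded away from $1$ under positive entropy. The resolution is that the level boundary is never actually crossed: because $\mathcal{F}_{m(n)}$ is a subalgebra containing the degree-zero part of $V'$, right multiplication by that part cannot leave $\mathcal{F}_{m(n)}L_{n}\mathcal{F}_{m(n)}$, so the only genuine growth is forward and is governed by the hypothesis. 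Making this absorption precise---tracking how the Cuntz--Krieger relations redistribute backward ghost tails without inflating dimension, and confirming that the short-tail corrections disappear in the ratio---is the technical heart of the argument.
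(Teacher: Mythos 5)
Your structural picture of $K[\mathfrak{G}_X]$ is accurate (the $\mathbb{Z}$-grading, and the degree-zero part $\mathcal{F}=\bigcup_m\mathcal{F}_m$ being a nested union of finite-dimensional subalgebras), and the absorption of the degree-zero part of $V'$ into $\mathcal{F}_{m(n)}$ does work. The genuine gap is that neither of your mechanisms controls the negative-degree direction, i.e.\ right multiplication by $T^{-1}$, and that is where the entire difficulty of the theorem sits. Concretely, $T^{-1}=\sum_j\theta_{\emptyset,x_j}$, so $\theta_{\mu,\delta}T^{-1}=\sum_j\theta_{\mu,x_j\delta}$: multiplying by $T^{-1}$ extends the ghost word $\delta$ by a letter \emph{on the left}, whereas the Cuntz--Krieger relations $\theta_{\mu,\delta}=\sum_e\theta_{\mu e,\delta e}$ only rewrite elements by appending the \emph{same} letter on the right of both words. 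Hence, expanding any element of $\tilde{L}_n=\mathcal{F}_{m}L_n\mathcal{F}_{m}$ in the basis of bisections of ghost length $m+1$, it is supported on bisections $\theta_{\nu,\eta}$ whose word and ghost end in the same letter; but $\theta_{\mu,x_j\delta}$ has word ending in the last letter of $\mu$ and ghost ending in the last letter of $\delta$, and for an aperiodic subshift these differ for a non-negligible set of spanning pairs $(\mu,\delta)$. All such images are linearly independent modulo $\tilde{L}_n$, so $\dim\bigl(\tilde{L}_n+\tilde{L}_nT^{-1}\bigr)$ exceeds $\dim\tilde{L}_n$ by an amount proportional to $\dim\tilde{L}_n$, not $o(\dim\tilde{L}_n)$: the sequence $\{\tilde{L}_n\}$ is not F\o lner for any choice of $m(n)$. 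Your sentence ``the level boundary is never actually crossed'' concerns only degree zero; the backward boundary is crossed massively. The same gap recurs in your module argument: a $(V,\varepsilon)$-invariant $A_X$-subspace $L\leq M$, even after multiplication by $\mathcal{F}_m$, need not be almost invariant under $T^{-1}$ (and, secondarily, $\mathcal{F}_m$ does not commute with $A_X$, so passing to $\mathcal{F}_mL_n\mathcal{F}_m$ or $L\mathcal{F}_m$ can destroy the forward F\o lner estimate you started from, with uncontrolled dimension inflation).

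What is needed---and what the paper uses---is not the AF structure of degree zero but the specific \emph{chain} form of the F\o lner spaces supplied by Theorem~\ref{mon_amenable}: $L=\Span_K\{u,ux_{i_1},\dots,ux_{i_1}\cdots x_{i_r}\}$ where $u$ has a unique length-$D$ right prolongation. Uniqueness gives $ux_{i_1}\cdots x_{i_{s-1}}(x_1+\cdots+x_d)=ux_{i_1}\cdots x_{i_s}$, and since $T=x_1+\cdots+x_d$ in $K[\mathfrak{G}_X]$, multiplication by $T^{-1}$ steps \emph{back} along the chain, $ux_{i_1}\cdots x_{i_s}T^{-1}=ux_{i_1}\cdots x_{i_{s-1}}$, so $LT^{-1}\subseteq L+KuT^{-1}$ and the same chains are F\o lner for $K[\mathfrak{G}_X]$; this already proves the first assertion. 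For the minimal case the paper does not restrict scalars and quote Theorem~\ref{minimal amenable}; it reruns the Case 1/Case 2 dichotomy inside the module with two extra devices, both aimed at $T^{-1}$: the chain is started at offset $R$ (so that monomials of degree as low as $-R$ still map it into a slightly longer chain $L_D$), and in Case 2 one shows $\xi\cdot A_X$ is finite-dimensional and then uses a Cayley--Hamilton argument (a nonzero polynomial in $T$ annihilates $\xi\cdot A_X$) to get $\xi\cdot T^{-1}\in\xi\cdot A_X$, whence $\xi\cdot K[\mathfrak{G}_X]=\xi\cdot A_X$ is finite-dimensional and invariant. Your proposal has no analogue of the offset or of this Cayley--Hamilton step; any repair of it would, in effect, be forced back to the unique-prolongation chains, i.e.\ to the paper's argument.
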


Theorems \ref{minimal amenable} and \ref{convo} can be thought of as algebraic counterparts of a result of Juschenko and Monod \cite{JusMon} that the topological full group of a minimal subshift is amenable.

To this end, we combinatorially characterize monomial algebras which are amenable or exhaustively amenable as modules over themselves (Theorem \ref{mon_amenable}, Lemma \ref{A/N}), and show that a monomial algebra that is amenable (resp.~exhausively amenable) as a module over itself admits monomial $(V,\varepsilon)$-invariant spaces (resp.~a monomial Følner sequence), thereby resolving a problem of Ceccherini-Silberstein and Samet-Vaillant\footnote{We note that Ceccherini-Silberstein and Samet-Vaillant use a different definition of amenability of an algebra than the one used by us; it is, however, not difficult to see that their definition is equivalent to what we call an algebra which is exhaustively amenable as a module over itself.} on the existence of monomial Følner sequences \cite{CSSV_Expo} (see also \cite[Page~85, Problem~14]{CSSV_JMS}).

Demonstrating the necessity of the minimality assumption to get a left-right symmetric amenabiliy result in Theorem \ref{minimal amenable}, we give an example of a transitive subshift without isolated points, whose associated monomial algebra %(which is a prime ring) 
is exhaustively amenable as a left, but not right module over itself. It follows that amenability transfers from a monomial algebra of a subshift to its convolution algebra, but is not inherited from a convolution algebra to its monomial subalgebra associated with the underlying subshift.

We then turn to investigate the relation between free subalgebras and amenability,
% While an amenable group cannot contain a non-abelian free subgroup, there exist algebras over which every module is exhaustively amenable, yet contain noncommutative free subalgebras.
showing that a monomial algebra which does not contain a noncommutative free subalgebra generated by homogeneous elements is exhaustively amenable as a module over itself. 
\begin{thm} \label{no free amenable}
Let $A$ be a monomial algebra over an infinite field. If $A$ is not exhaustively amenable as a module over itself, then it contains a homogeneous free subalgebra.
\end{thm}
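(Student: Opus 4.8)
I will prove the statement in its contrapositive-flavored form, producing the free subalgebra directly from the failure of amenability. Throughout I identify $A = K\langle x_1,\dots,x_d\rangle/I$ with its monomial basis $L$, the factor-closed language of words avoiding the monomial relations in $I$, and I record the elementary coding fact that any two distinct words $u,v$ of a common length $n$ freely generate a free submonoid of the free monoid (a word of length $mn$ decodes uniquely into length-$n$ blocks). Consequently, if I can exhibit distinct words $u,v$ with $|u|=|v|=n$ and $\{u,v\}^*\subseteq L$, then the distinct, nonzero, hence $K$-linearly independent monomials $\{u,v\}^*$ span a copy of $K\langle u,v\rangle$ inside $A$, with $u,v$ homogeneous of degree $n$; the assignment $X\mapsto u$, $Y\mapsto v$ is injective because distinct reduced words map to distinct allowed monomials. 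Thus the whole problem reduces to locating such a \emph{recurrent branching pattern} in $L$, or a generic enhancement of one.

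The first reduction is growth-theoretic: if $L$ has subexponential growth, then the prefix filtration $L_n=\Span\{\,w\in L:\deg w\le n\,\}$ is a Følner sequence, since subexponential growth yields a subsequence along which consecutive dimension ratios tend to $1$, and then $L_nV\subseteq L_{n+\deg V}$ makes each $L_n$ along that subsequence $(V,\varepsilon)$-invariant for every $V$; so $A$ is exhaustively amenable over itself. Hence failure of exhaustive amenability forces $L$ to have exponential growth. Exponential growth alone does not suffice, however, as minimal subshifts of positive entropy are exhaustively amenable by \Tref{minimal amenable} while having exponential complexity; the point is that in a minimal system every branching recurs in a balanced way, which is exactly what produces small Følner boundaries. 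To separate genuinely free branching from this balanced recurrence I invoke the combinatorial characterization of \Tref{mon_amenable} together with \Lref{A/N}, which translate the failure of exhaustive amenability into a uniform isoperimetric lower bound for the right one-letter-extension operation on $L$ and let me pass to the recurrent core, discarding the transient words spanning the ideal $N$.

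On the core I extract the pattern by a König's lemma and pigeonhole argument on the prefix tree. The uniform expansion supplied by the negated Følner condition guarantees, at every scale, words admitting at least two distinct equal-length continuations that re-enter a bounded set of follower types; iterating and applying the pigeonhole principle to follower sets yields a single word $p$ and a finite family of equal-length words whose $p$-separated concatenations all remain in $L$. When two of these words concatenate unambiguously I obtain an honest monomial free pair $u,v$ and conclude as in the first paragraph. In the remaining case the concatenations suffer overlap ambiguities, and here the hypothesis that $K$ is infinite enters decisively: I take generic combinations $f_i=\sum_j c_{ij}m_j$ of the equal-degree monomials $m_j$ produced by the pattern. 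Freeness of $f_1,\dots,f_k$ amounts to non-vanishing of countably many polynomials in the $c_{ij}$, namely the determinants witnessing linear independence of the degree-$mn$ products $f_{i_1}\cdots f_{i_m}$; each is not identically zero because the pattern supplies enough independent survivals in $L$, so the common non-vanishing locus is a nonempty Zariski-open set, which has a $K$-point exactly because $K$ is infinite. This yields homogeneous free generators $f_i\in A$.

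The main obstacle is the middle step: upgrading the purely quantitative isoperimetric failure to a \emph{recurrent, self-similar} branching pattern rather than mere exponential growth. The subtlety is genuine, since amenable algebras of exponential growth exist, so I cannot argue from growth alone and must use \Tref{mon_amenable} to certify that the branching witnessed by the isoperimetric failure is free-like, with follower structure that does not collapse into the balanced recurrence of a minimal system. The second, more technical difficulty is controlling overlaps so that the generic combinations are provably free; this is where the infinite-field genericity argument must be set up carefully, ensuring the relevant product-independence polynomials are not identically zero, which ultimately rests on the same combinatorial pattern providing linearly independent images in each homogeneous degree.
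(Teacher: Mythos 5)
Your opening reduction is the same as the paper's: pass to the right-prolongable core via \Lref{A/N}, then use \Tref{mon_amenable} to convert non-exhaustive-amenability into the branching property that for some $D$ every non-zero monomial has at least two distinct length-$D$ right prolongations. After that, however, your argument diverges and has two genuine gaps. The first is the extraction of the ``recurrent branching pattern.'' You claim that K\"onig's lemma plus pigeonhole on ``follower types'' produces a word $p$ and equal-length words whose $p$-separated concatenations all survive in $L$. But a general monomial algebra has infinitely many distinct follower sets $F(w)=\{v : wv\neq 0\}$ --- finiteness of the follower-set collection characterizes sofic shifts --- so there is no finite set of types to pigeonhole into, and no other mechanism is offered. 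The branching property is a purely local, one-step statement; upgrading it to a self-similar pattern $\{u,v\}^*\subseteq L$ (or its $p$-separated variant) is exactly the hard content of the theorem, and it is left unproven. Note that the paper never establishes such a monomial pattern either: its free generators are linear combinations of monomials, not monomials.

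The second gap is the genericity step. Freeness of your $f_i=\sum_j c_{ij}m_j$ is a \emph{countable} conjunction of polynomial non-vanishing conditions, one family per degree. A countable intersection of nonempty Zariski-open sets is not Zariski-open, and over a countably infinite field --- which the hypothesis allows, e.g.\ $\mathbb{Q}$ or $\overline{\mathbb{F}_p}$ --- such an intersection can have no $K$-points at all (the sets $\{c\neq a\}$, $a\in \mathbb{Q}$, are nonempty open subsets of the affine line with empty common $\mathbb{Q}$-locus). Infiniteness of $K$ only handles \emph{finitely} many such conditions. Moreover, the non-vanishing of each determinant polynomial is asserted (``the pattern supplies enough independent survivals'') rather than proven, and it is precisely the crux. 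The paper's proof is engineered to avoid both problems: it uses only finitely many generic choices --- distinct non-zero scalars $\alpha_u$, $|u|=D$ --- sets $x=\sum_{|u|=D}u$ and $y=\sum_{|u|=D}\alpha_u u$, and kills a putative minimal-degree homogeneous relation directly: writing it as $\sum_w\lambda_w w(x,y)x=\sum_w\lambda'_w w(x,y)y$, minimality and regularity of $y$ (from prolongability) give a monomial $m$ with non-zero aggregated coefficient $\gamma_m$, and comparing coefficients of $mu_1$ and $mu_2$ for two distinct length-$D$ prolongations $u_1,u_2$ of $m$ forces $\alpha_{u_1}=\alpha_{u_2}$, a contradiction. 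To repair your argument you would either have to prove the recurrence pattern exists (unknown, and not needed by the paper) or replace the countable genericity scheme with a finitary one in the spirit of the paper's coefficient-comparison argument.
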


While an algebra of subexponential growth cannot contain a noncommutative free subalgebra, the converse is not true even for monomial algebras (see Proposition \ref{Example_10}). % To show the non-triviality of this result, we show that a certain monomial algebra constructed upon the Champernowne constant has exponential growth but contains no free subalgebras.

Our main results regarding monomial algebras can be summarized in the following diagram\footnote{The diagram refers to monomial algebras. The implication `No free subalgebras' $\implies$ `Exhaustively amenable over itself' works over any infinite field.}:

% https://urldefense.proofpoint.com/v2/url?u=https-3A__q.uiver.app_-3Fq-3DWzAsMTAsWzIsMCwiXFx0ZXh0e05vIGZyZWUgc3ViYWxnZWJyYX0iXSxbNCwwLCJcXHRleHR7RXhhaHVzdGl2ZWx5IGFtZW5hYmxlIGFzIGEgbW9kdWxlIG92ZXIgaXRzZWxmfSJdLFsyLDFdLFs0LDJdLFswLDAsIlxcdGV4dHtTdWJleHBvbmVudGlhbCBncm93dGh9Il0sWzIsM10sWzQsMywiXFx0ZXh0e0V2ZXJ5IG1vZHVsZSBpcyBleGhhdXN0aXZlbHkgYW1lbmFibGV9Il0sWzIsMl0sWzAsMywiQT1BX1hcXCBcXHRleHR7Zm9yIGEgbWluaW1hbCBzdWJzaGlmdH1cXCBYIl0sWzAsMiwiXFx0ZXh0e1Byb2plY3RpdmVseSBzaW1wbGV9Il0sWzAsMV0sWzQsMF0sWzQsNl0sWzYsMV0sWzgsNl0sWzgsOSwiIiwyLHsic3R5bGUiOnsidGFpbCI6eyJuYW1lIjoiYXJyb3doZWFkIn19fV1d&d=DwIGAw&c=-35OiAkTchMrZOngvJPOeA&r=RNwyfvQpj9wMIDlM8O5SINt34kikV1kUbiIhuuiLNaI&m=wbQZBxNIw8IfvEjOo_YeLzUDRkhYf8p3z9ahRJ_v9YtVnX9eNTbHghBSgoyTpTmF&s=XqdBqiSF4AAdJoMohHVtswwcPc-s_za7zg87oMJps0w&e= 
\[\begin{tikzcd}
	{\txt{Subexponential \\ growth}} & {\txt{No free \\ subalgebras}} & {\txt{Exhaustively amenable \\ as a module over itself}} \\
	& {} \\
	{\txt{$A=A_X$ for a \\ minimal subshift $X$}} & {} & {} \\
	{\txt{Projectively simple}} & {} & {\txt{Every module is \\ exhaustively amenable.}}
	\arrow[from=1-2, to=1-3]
	\arrow[from=1-1, to=1-2]
	\arrow[from=1-1, to=4-3]
	\arrow[from=4-3, to=1-3]
	\arrow[from=4-1, to=4-3]
	\arrow[from=4-1, to=3-1, Rightarrow, no head]
\end{tikzcd}\]
The outline of this paper is as follows.  In \S\ref{sec:subshift} we give background on subshifts and monomial algebras.  In \S\ref{sec:comb} we give a combinatorial characterization of amenability for monomial algebras and answer a question concerning isoperimetric profiles of general algebras.  In \S\ref{sec:amen} we give the proof of Theorem \ref{minimal amenable}, and we prove Theorem \ref{convo} in \S\ref{sec:convo}. Finally, we prove Theorem \ref{no free amenable} in \S\ref{sec:free}.
% \vskip 2mm

\textit{Conventions and terminology}. 
% We assume that our algebras and modules are countably generated.
The various definitions for amenable \textit{algebras} amount to three: algebras that are amenable modules over themselves \cite{Ara_BMS,CSSV_JMS,CSSV_Expo}; algebras that are exhaustively amenable modules over themselves \cite{Elek,Gromov} (in \cite{Ara_BMS} these are called `properly amenable'); and algebras over which every module is amenable \cite{Bartholdi_book}.
Algebras are associative (but not necessarily commutative) and unital. Modules are right modules by default, unless otherwise stated or clear from the context.

% NO FREE --> EXHAUSTIVELY... WORKS OVER INFINITE FIELDS.

% \begin{rem} In \cite{CSSV_Expo}, the authors refer to amenable algebras as algebras which are amenable as modules over themselves. If the dimensions of the Følner spaces increase to infinity, this notion of amenability coincides (for countably generated algebras) with having a sequence of Følner spaces whose union is the whole algebra. \end{rem}

\section{Monomial algebras and subshifts}\label{sec:subshift}

Let $K$ be an arbitrary field. A monomial algebra is an affine $K$-algebra given by a presentation consisting solely of monomial relations; that is:
$$ A\cong K\left<x_1,\dots,x_d\right>/I,$$ where $I$ is an ideal of the free algebra $K\left<x_1,\dots,x_d\right>$, generated by elements of the form $x_{i_1}\cdots x_{i_r}$, called monomials. Monomial algebras are graded by total degree of monomials: $A=\bigoplus_{i=0}^{\infty} A_i$ where $A_i$ is the space spanned by length-$i$ monomials in the generators. The ideal $\bigoplus_{i=1}^{\infty} A_i$ is the augmentation ideal of $A$. We say that a non-zero monomial $u$ in a monomial algebra is \emph{right prolongable} if there is a non-empty monomial $v$ such that $uv\neq 0$ (left prolongability is defined similarly), and \textit{prolongable} if it is both left and right prolongable. We say that $A$ is (left/right) \emph{prolongable} if all of its non-zero monomials are (left/right) prolongable.

Let $\Sigma=\{x_1,\dots,x_d\}$ be a finite alphabet. A subshift $X\subseteq \Sigma^{\mathbb{Z}}$ is a (non-empty) closed, shift-invariant topological subspace. We say that $X$ is \textit{transitive} if it has a dense shift-orbit, and \textit{minimal} if it contains no non-empty proper subshifts. Denote the shift operator by $T$.

Let $W\in \Sigma^{\mathbb{Z}}$ be an infinite word. 
We denote by $W[i,j]$ the factor (namely, subword) of $W$ starting from the $i$-th position and ending at the $j$-th position.
We say that $W$ is \textit{recurrent} if it contains infinitely many occurrences of each finite factor, and \textit{uniformly recurrent} if every finite factor $u$ has some constant $N(u)$ such that every factor of $W$ of length at least $N(u)$ contains an occurrence of $u$.

Every infinite word $W$ gives rise to a subshift, by considering the closure of its shift-orbit $\overline{\{T^i(W)\}}_{i\in \mathbb{Z}}$; and every subshift $X$ gives rise to a monomial algebra $A_X$, spanned by the finite factors of $X$ with multiplication being concatenation (product of factors of $X$ which does not appear as a factor of $X$ is set to be $0$). 

These constructions induce correspondences between the following subclasses:

$$\left\{\txt{Infinite \\words}\right\}\longrightarrow 
\left\{\txt{Transitive \\ subshifts}\right\}\ \subset\  \left\{\txt{Subshifts}\right\} \longleftrightarrow \left\{\txt{Prolongable \\ monomial algebras}\right\}.$$

An algebra is \textit{prime} if the product of non-zero ideals in it is non-zero. Then we have the following bijective correspondences:

%%%%% NO ISOLATED POINTS --> IRREDUCIBLE (THE SAME FOR TRANSITIVE APERIODIC SUBSHIFTS). MENTION WHAT IRREDUCIBLE MEANS!

$$\left\{\txt{Recurrent \\ infinite words}\right\}\longrightarrow \left\{\txt{Transitive irreducible \\ subshifts}\right\} \longleftrightarrow \left\{\txt{Prime monomial\\ algebras}\right\}.$$

An infinite-dimensional graded algebra is \textit{projectively simple} if its homogeneous non-zero ideals are finite-codimensional over the base field. Such algebras are sometimes called \textit{just-infinite}, and they are always prime.
Projectively simple algebras appear in various algebro-geometric contexts (e.g., see \cite{ProjSimple}), in combinatorial ring theory (see, for example,~\cite{BBL}) and symbolic dynamics, as they occur as certain subalgebras of convolution algebras of (the groupoid of action of) minimal subshifts (see \cite{Nekrashevych}).
In fact, if $X$ is a transitive (aperiodic) subshift then the convolution algebra of the \'etale groupoid of the action is isomorphic to a suitable localization of $A_X$ at the shift element (which is represented in $A_X$ by the sum of all letters of the underlying alphabet).
We have the following bijective correspondences:

$$\left\{\txt{Uniformly recurrent \\ infinite words}\right\}\longrightarrow \left\{\txt{Minimal \\ subshifts}\right\} \longleftrightarrow \left\{\txt{Projectively simple \\ monomial algebras}\right\}.$$

\section{Amenability of monomial algebras}\label{sec:comb}
In this section we give a general characterization of amenability in the case of monomial algebras and give results concerning isoperimetric profiles.

\subsection{A combinatorial characterization of amenability}
The main goal of this subsection is to prove the following result.
\begin{thm} \label{mon_amenable}
Let $A=K\left<x_1,\dots,x_d\right>/I$ be a right prolongable monomial algebra. Then the following are equivalent:
\begin{enumerate}
    \item $A$ is amenable as a right module over itself;
    \item for every $D\in \mathbb{N}$ there exists a non-zero monomial $u\in A$ which has at most one length-$D$ monomial $v$ such that $uv\neq 0$;
    \item $A$ has a Følner sequence as a right module over itself, consisting of spaces spanned by monomials (with respect to any generating subspace);
    \item $A$ is exhaustively amenable as a right module over itself.
\end{enumerate}
% Moreover, if $A$ is not necessarily porlongable, $(1)$ and $(3)$ are equivalent and imply $(2)$.
% Let $X\subseteq \Sigma^{\mathbb{Z}}$ be a subshift.
\end{thm}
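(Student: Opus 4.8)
The plan is to establish the four equivalences through the cycle $(1)\Rightarrow(2)\Rightarrow(3)\Rightarrow(4)\Rightarrow(1)$, exploiting throughout the combinatorial rigidity of monomial algebras: the nonzero monomials form a $K$-basis of $A$, so every monomial subspace $L=\Span(S)$ satisfies $\dim_K L=\card S$, and for monomial subspaces $LV$ is again monomial with $\dim_K LV$ equal to the number of distinct nonzero products of a monomial of $S$ by a monomial from a basis of $V$. Two implications are soft and I would dispatch them first. For $(4)\Rightarrow(1)$ it suffices to take $W=0$, since exhaustive amenability is by definition stronger than amenability. For $(3)\Rightarrow(4)$ a monomial Følner sequence is in particular a Følner sequence, so the criterion recalled in the introduction (possession of a Følner sequence is equivalent to exhaustive amenability of an affine algebra over itself) gives (4) at once. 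The entire content therefore sits in $(1)\Rightarrow(2)$ and $(2)\Rightarrow(3)$.

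For $(1)\Rightarrow(2)$ I would argue by contraposition: assume (2) fails for some fixed $D$, so every nonzero monomial admits at least two length-$D$ continuations, and test amenability against the monomial space $V=A_{\le D}$ spanned by all monomials of length at most $D$. The crux is to reduce an arbitrary finite-dimensional $L\le A$ to monomial combinatorics. Fix the total degree-lexicographic order, which is compatible with right multiplication, and pass to the leading-monomial set $\mathrm{LM}(L)$, so that $\card{\mathrm{LM}(L)}=\dim_K L$. For $m\in\mathrm{LM}(L)$, realized as the leading term of some $f\in L$, and any length-$D$ monomial $v$ with $mv\neq 0$, right compatibility sends every lower term of $f$ strictly below $mv$ (or to $0$), so $\mathrm{LM}(fv)=mv$ and hence $mv\in\mathrm{LM}(LV)$. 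Since the extension length $D$ is fixed, $(m,v)\mapsto mv$ is injective, so the number of such products is $\sum_m \#\{v\}\ge 2\,\card{\mathrm{LM}(L)}$. This forces $\dim_K LV=\card{\mathrm{LM}(LV)}\ge 2\dim_K L$ for every finite-dimensional $L$, so no $(V,\tfrac12)$-invariant subspace exists and $A$ is not amenable. I expect this leading-monomial reduction—transporting an isoperimetric statement about arbitrary subspaces to a counting statement about monomials—to be the main obstacle; once it is in place the branching hypothesis finishes the argument immediately.

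For $(2)\Rightarrow(3)$ the key observation is that, under right prolongability, condition (2) actually yields an \emph{unbranched} path. Given $D$, pick a nonzero monomial $u$ with at most one length-$D$ continuation; right prolongability supplies at least one, say $v=v_1\cdots v_D$. Any nonzero right extension of $u$ of length $k\le D$ prolongs to length $D$, hence equals a prefix of $v$, so the only nonzero extensions of $u$ up to depth $D$ are $u,uv_1,\dots,uv_1\cdots v_D$, with no branching. Fixing $N$, I would take $S=\{uv_1\cdots v_k:0\le k\le D\}$; multiplying $S$ by $A_{\le N}$ keeps every source with $k\le D-N$ inside $S$, while the at most $N$ remaining sources contribute at most $N\dim_K A_{\le N}$ new monomials, a constant independent of $D$. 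Thus $\dim_K S A_{\le N}/\dim_K S\le 1+N\dim_K A_{\le N}/(D+1)\to 1$, producing monomial $(A_{\le N},\varepsilon)$-invariant spaces of unbounded dimension for every $N$ and $\varepsilon$. To upgrade these to a Følner \emph{sequence} I would diagonalize: enumerate the monomials of $A$ and, at stage $n$, adjoin to a sufficiently long path $S$ the finite set $L_{n-1}\cup\{m_1,\dots,m_n\}$. Adjoining a fixed finite set perturbs the boundary only by a constant, so taking $D$ large keeps the ratio below $1+1/n$ while guaranteeing nesting and $\bigcup_n L_n=A$; since $A_{\le 1}\subseteq A_{\le n}$, the same bound controls the ratio with respect to the generating subspace. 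The resulting chain is the desired monomial Følner sequence, closing the cycle.
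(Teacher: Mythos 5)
Your proof is correct and follows essentially the same route as the paper: a leading-monomial counting argument for $(1)\Rightarrow(2)$ (the paper fixes a basis of $L$ with distinct leading monomials and shows the $2\dim_K L$ products with length-$D$ continuations are linearly independent), an unbranched-path-of-prolongations construction for $(2)\Rightarrow(3)$, and the soft implications $(3)\Rightarrow(4)\Rightarrow(1)$. You are in fact slightly more careful than the paper at two points: you test amenability against $V=A_{\leq D}$, which actually contains the length-$D$ monomials (the paper's stated choice $V=\Span_K\{1,x_1,\dots,x_d\}$ is too small as written when $D>1$, though the intended argument is clear), and you spell out the diagonalization needed to upgrade invariant subspaces of unbounded dimension to a nested, exhausting monomial F{\o}lner sequence, a step the paper dispatches with ``it is clear.''
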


\begin{proof}
% Suppose that for no non-zero monomial $w\in A$ it is possible that $A_{\geq 1}w=0$.

$(1)\implies (2)$: 
% Otherwise, suppose $u\in A$ is a non-zero monomial such that $ux_1=\cdots=ux_d=0$. Then for any $S$, the $1$-dimensional space $U=Ku$ is invariant under the right action of $S$.
Assume, to the contrary, that $A$ is amenable as a right module over itself but for some $D$, for every monomial $u\in A$ there are distinct monomials $v_1,v_2$ of length $D$ such that $uv_1,uv_2\neq 0$. Let $S=\{v_1,\dots,v_r\}$ be the set of all non-zero length-$D$ monomials in $A$.

Let $V=\Span_K\{1,x_1,\dots,x_d\}$, then by the amenability assumption there exists a 
finite-dimensional subspace $L\leq A$, say, $\dim_K L=n$, such that $\dim_K LV < 2\dim_K 
L$. Fix a basis for $L$ consisting of elements with distinct leading monomials (this is 
always possible by induction): $$L=~\Span_K\{f_1,\dots,f_n\}.$$ Let $z_i$ be the leading 
monomial of $f_i$ for each $1\leq i\leq n$. Let $v_{i,1},v_{i,2}$ be distinct length-$D$ 
monomials such that $z_iv_{i,1}, z_iv_{i,2}\neq 0$. Since all $z_iv_{i,j}$ are non-zero, they 
are the leading monomials of $f_iv_{i,j}$ for $1\leq i\leq n,\ j\in \{1,2\}$, respectively. But 
these leading monomials are distinct, since if $z_iv_{i,j}=z_{i'}v_{i',j'}$ then they must have 
the same length, and since $|v_{i,j}|=|v_{i',j'}|=D$, we get that $z_i=z_{i'}$ and $v_{i,j}
=v_{i',j'}$, and by the way we have chosen $f_1,\dots,f_n$ we have $i=i'$, and then $v_{i,j}
=v_{i,j'}$ so $j=j'$. Therefore the set $$\{f_iv_{i,j}|\ 1\leq i\leq n,\ j\in \{1,2\}\}$$ is linearly 
independent, and consequently $\dim_K LV\geq 2\dim_K L$, a contradiction.

$(2)\implies (3)$: Now assume that for any $D$, there exists a non-zero monomial $u\in A$ such that there is at most one length-$D$ monomial $v$ such that $uv\neq 0$; denote the set of such monomials $u$ by $U(D)$. 
% We may assume that $A$ is right prolongable. Otherwise, suppose $u\in A$ is a non-zero monomial such that $ux_1=\cdots=ux_d=0$, so $u$ has no length-$D$ right prolongations for all $D\in \mathbb{N}$.

% Otherwise, suppose $u\in A$ is a non-zero monomial such that $ux_1=\cdots=ux_d=0$. Then for any $S$, the $1$-dimensional space $U=Ku$ is invariant under the right action of $S$.

% It is then immediate that there are infinitely many such monomials ($u$ is porolongable to the left).
Let $V=\Span_K \{f_1,\dots,f_r\}\leq A$ be given, as well as some $\varepsilon > 0$. Let $R$ bound from above the lengths of the monomials on which $V$ is supported. Take $N>R/\varepsilon$. Let $u\in U(N+R)$. For each $0\leq i\leq N+R$, let $w_i$ be the unique length-$i$ monomial for which $uw_i\neq 0$ (so $w_0=1$).
Let $L=\Span_K\{uw_0,\dots,uw_N\}$. Notice that for any $0\leq i\leq N$ and for every $f\in A$ supported on monomials of length at most $R$, we have that $uw_if$ is a linear combination of monomials of the form $uw_j$ for $i\leq j\leq N+R$. Thus $$LV\subseteq L+\Span_K\{uw_{N+1},\dots,uw_{N+R}\}.$$ It follows that
$$ \dim_K LV \leq \dim_K L + R < (1+\varepsilon)\dim_K L, $$
and so $L$ is a $(V,\varepsilon)$-invariant subspace. Since the dimension of $L$ tends to infinity as $\varepsilon\rightarrow 0$, it is clear that we can get a Følner sequence of monomial subspaces.

$(3)\implies (4)\implies (1)$ holds by definition.

% For the opposite direction, assume that for some $D$, for every monomial $u\in A$ there are distinct monomials $v_1,v_2$ of length $D$ such that $uv_1,uv_2\neq 0$. Set $S=\{v_1,\dots,v_r\}$ the set of all non-zero length-$D$ monomials in $A$.  Let $W$ be an arbitrary finite dimensional subspace of $A$, say, $\dim_K W=n$. Fix a basis for $W$ consisting of elements with distinct leading monomials: $$V=~\Span_K\{f_1,\dots,f_n\}.$$ Let $z_i$ be the leading monomial of $f_i$ for all $1\leq i\leq n$. Let $v_{i,1},v_{i,2}$ be distinct length-$D$ monomials such that $z_iv_{i,1}, z_iv_{i,2}\neq 0$. Since all $z_iv_{i,j}$ are non-zero, they are the leading monomials of $f_iv_{i,j}$ for $1\leq i\leq n,\ j\in \{1,2\}$, respectively. But these leading monomials are distinct, since if $z_iv_{i,j}=z_{i'}v_{i',j'}$ then they must have the same length, and since $|v_{i,j}|=|v_{i',j'}|=D$, we get that $z_i=z_{i'}$ and $v_{i,j}=v_{i',j'}$, and by the way we have chosen $f_1,\dots,f_n$ we have $i=i'$, and then $v_{i,j}=v_{i,j'}$ so $j=j'$. Therefore the set: $$\{f_iv_{i,j}|\ 1\leq i\leq n,\ j\in \{1,2\}\}$$ is linearly independent, and consequently $\dim_K WS\geq 2\dim_K W$. It follows that $A$ is non-amenable.

% Suppose $\sum_{i=1}^{n}\sum_{j=1}^{2} c_{i,j} f_i v_{i,j}=0$.
% Let $V$ be any subspace. Fix a basis for $V$ consisting of elements with distinct leading monomials: $V=kf_1+\cdots+kf_n$. Let $z_i$ be the leading monomial of $f_i$ for all $1\leq i\leq n$. Notice that for any $T\subset \{f_1,\dots,f_n\}$ ???
\end{proof}

\begin{lem} \label{A/N}
Let $A$ be a monomial algebra and let $S\subseteq A$ be the set of monomials which are not infinitely right prolongable. Then $N=\Span_K S\triangleleft A$ is an ideal and $A/N$ is a right prolongable monomial algebra. Moreover,
\begin{itemize}
    \item if $|S|=\infty$ then $A$ has an infinite-dimensional monomial invariant subspace and is thus exhaustively amenable as a module over itself;
    \item if $|S|<\infty$ then $A$ is exhaustively amenable as a module over itself if and only if $A/N$ is exhaustively amenable as a module over itself, if and only if both have a sequence of Følner spaces (each one as a module over itself) spanned by monomials.
\end{itemize}
\end{lem}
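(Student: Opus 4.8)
The plan is to establish \Lref{A/N} in three stages: first that $N$ is a two-sided ideal with the claimed quotient property, then the dichotomy based on $|S|$.

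\textbf{The ideal structure.} First I would verify that $N=\Span_K S$ is an ideal. The key observation is that $S$ is closed under the operation of taking multiples: if $u\in S$ (so $u$ is not infinitely right prolongable, meaning there is a bound on the lengths of monomials $w$ with $uw\neq 0$) and $x$ is a generator, then any monomial of the form $ux$ (if nonzero) is also in $S$, since $(ux)w\neq 0$ forces $u(xw)\neq 0$, so the prolongability length of $ux$ is strictly smaller. Similarly, for left multiplication, if $u\in S$ then $xu$ is not infinitely right prolongable either, because $(xu)w\neq 0$ gives a monomial $xuw$ whose right-tail prolongability is governed by that of $u$. Thus $SA\subseteq S\cup\{0\}$ and $AS\subseteq S\cup\{0\}$ at the level of monomials, so $N$ is a two-sided ideal. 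The quotient $A/N$ is spanned by the images of the infinitely-right-prolongable monomials, and these are exactly the nonzero monomials of $A/N$; by construction each such monomial $u$ admits, for every length, some nonzero right prolongation, so $A/N$ is right prolongable.

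\textbf{The case $|S|=\infty$.} Here the goal is to produce an infinite-dimensional monomial $(V,\varepsilon)$-invariant subspace, which by the remark following the module definitions suffices for exhaustive amenability. If $S$ is infinite, then the monomials in $S$ have unbounded prolongability failure, but more usefully, the set of nonzero monomials $w$ reachable from some monomial in a dead-end is finite-dimensional per dead-end. I would argue that $N$ itself is an infinite-dimensional subspace on which right multiplication by any fixed finite-dimensional $V$ stays within $N$ plus a controlled overflow; concretely, the span of the monomials in $S$ of bounded ``remaining length'' gives nested finite-dimensional subspaces whose boundary under $V$ is negligible, yielding a Følner sequence inside $N$.

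\textbf{The case $|S|<\infty$.} When $S$ is finite, $N$ is finite-dimensional, so $A$ and $A/N$ differ only in finitely many dimensions. I would show exhaustive amenability transfers both ways: a monomial Følner sequence $L_1\subseteq L_2\subseteq\cdots$ for $A/N$ lifts to monomial subspaces of $A$ by selecting the representing monomials, and adjoining the finitely many monomials of $N$ changes dimensions by a bounded additive constant, which does not affect the Følner ratio $\dim_K L_nV/\dim_K L_n\to 1$. Conversely, intersecting a Følner sequence for $A$ with the prolongable monomials and projecting to $A/N$ only discards a bounded number of basis vectors and a bounded boundary contribution, preserving the Følner property. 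The main obstacle I anticipate is the $|S|=\infty$ case: unlike the finite case where everything is a bounded perturbation, here one must genuinely construct the invariant subspaces inside $N$ and carefully control how right multiplication by $V$ spills out of the chosen finite-dimensional layers, ensuring the overflow is small relative to the layer dimension.
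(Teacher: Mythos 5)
Your treatment of the $|S|=\infty$ case --- which you yourself flag as the unresolved ``main obstacle'' --- rests on a step that fails. You propose that the spans of the monomials in $S$ of bounded ``remaining length'' form ``nested finite-dimensional subspaces'' whose overflow under $V$ can be controlled. But in the case at hand these spans are never finite-dimensional: whenever $S$ is infinite, the subset $S_0\subseteq S$ of monomials admitting \emph{no} nonzero right prolongation at all (not even by one letter) is already infinite. Indeed, each $u\in S$ has a maximal nonzero prolongation $uw$, which lies in $S_0$, so every element of $S$ is a prefix of an element of $S_0$; a finite $S_0$ would force $S$ to be finite. Since $S_0$ is precisely the set of remaining length $0$, the bottom layer of your filtration is infinite-dimensional and the layer-by-layer plan collapses. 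The observation that rescues the case --- and is the paper's actual proof --- is that no overflow control is needed: $N_0=\Span_K S_0$ is an infinite-dimensional monomial subspace with $N_0\cdot A_{\geq 1}=0$, so \emph{every} finite-dimensional subspace $L\leq N_0$ satisfies $LV\subseteq L$ for every finite-dimensional $V\leq A$; thus there are monomial $(V,\varepsilon)$-invariant subspaces of arbitrarily large dimension, which yields exhaustive amenability by the remark in the introduction. (One could also repair your plan by taking a finite subset of $S$ and closing it under right prolongation --- each element of $S$ has only finitely many nonzero prolongations --- again giving finite-dimensional monomial subspaces with $LV\subseteq L$; but some such fix is required.)

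In the $|S|<\infty$ case your two transfer arguments (lift a monomial F\o lner sequence of $A/N$ and adjoin $N$; intersect a F\o lner sequence of $A$ with the span of the infinitely right prolongable monomials and project) do establish the equivalence of exhaustive amenability of $A$ and of $A/N$, and your bookkeeping (additive error at most $s=|S|$, ratios still tending to $1$) matches the paper's computation; this is a reasonable self-contained substitute for the paper's citation of \cite[Proposition~3.8]{Ara_BMS}. However, the lemma asserts a three-way equivalence, and you never close it: nothing in your argument produces a monomial F\o lner sequence from the bare assumption that $A/N$ is exhaustively amenable. That implication is exactly Theorem~\ref{mon_amenable} applied to the right prolongable algebra $A/N$ --- which is the entire reason the lemma passes to the quotient --- and it must be invoked (the paper does so explicitly); without it your chain only gives (monomial F\o lner for $A/N$) $\implies$ ($A$ exhaustively amenable) $\iff$ ($A/N$ exhaustively amenable). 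Two smaller points: your justification that $A/N$ is right prolongable needs care, since a nonzero prolongation in $A$ may die in $A/N$ (one should extract, via K\"onig's lemma on the finite-branching prolongation tree, a prolongation that is itself infinitely right prolongable), and the degenerate case $\dim_K A/N<\infty$ should be dispatched separately, as the paper does.
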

\begin{proof}
That $N\triangleleft A$ and $A/N$ is right prolongable are straightforward.
% If $S=\emptyset$ the claim is immediate.
If $S$ is infinite, 
then the set $S_0\subseteq S$ of monomials which are not right prolongable (namely, even by one letter) is also infinite.
Now $N_0=\Span_S S_0$ forms an infinite-dimensional space all of whose subspaces are invariant for any $V\leq A$ and $\varepsilon>0$. Hence $A$ is exhaustively amenable as a module over itself.

Suppose that $|S|=s<\infty$. The claim becomes evident if $A/N$ is finite-dimensional, so assume otherwise.
That $A$ is exhaustively amenable over itself if and only if $A/N$ is exhaustively amenable over itself follows from \cite[Proposition~3.8]{Ara_BMS}. Since $A/N$ is prolongable, by Theorem \ref{mon_amenable} its exhaustive amenability over itself is equivalent to having monomial Følner sequences.

% The claim becomes evident if $A/N$ is finite-dimensional, so assume otherwise.
% Suppose that $A$ is exhaustively amenable over itself.
% Given $V\leq A/N$, $\varepsilon>0$ and $d\in \mathbb{N}$, consider $\widehat{V}=V+N\leq A$. Take a $(\widehat{V},\varepsilon/2)$-invariant subspace $L\leq A$ with $\dim_K L\geq \max\{ 2s(1+1/\varepsilon) , d+s\}$. Let $\overline{L}=L+N/N$, so $\dim_K \overline{L}\geq d$. We have:
% \begin{eqnarray*} \dim_K V\overline{L} & \leq & \dim_K \widehat{V}L \\ & < & (1+\varepsilon/2)\dim_K L \\ & \leq & (1+\varepsilon)(\dim_K L - s) \\ & \leq & (1+\varepsilon)\dim_K \overline{L} \end{eqnarray*} It follows that $A/N$ is amenable as a module over itself.

% Suppose that $A/N$ is exhaustively amenable as a module over itself. 
Given $V\leq A$, $\varepsilon>0$ and $d\in \mathbb{N}$, let $\overline{V}=(V+N)/N$ and pick a $(\overline{V},\varepsilon/2)$-invariant subspace $L\leq A/N$ spanned by monomials, with $$\dim_K L > \max\{2s/\varepsilon,d\}.$$ Consider $\widehat{L}=L+N\leq A$, clearly a monomial subspace (since $L$ and $N$ are). Then $\dim_K \widehat{L}\geq d$ and in addition:
\begin{align*}
\dim_K \widehat{L} V \leq & \dim_K L \overline{V} + s \\  \leq & (1+\varepsilon/2)\dim_K L + s \\  < & 
(1+\varepsilon)\dim_K L \\  \leq & (1+\varepsilon)\dim_K \widehat{L}.
\end{align*}
So $A$ has a monomial Følner sequence as a module over itself.
\end{proof}

\begin{cor} \label{Answer1}
Let $A$ be a monomial algebra which is amenable as a right module over itself. Then $A$ has monomial $(V,\varepsilon)$-invariant subspaces. Furthermore, if $A$ is exhaustively amenable as a right module over itself then it has a monomial Følner sequence.
\end{cor}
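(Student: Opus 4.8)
The plan is to prove Corollary \ref{Answer1} by reducing the general case to the right prolongable case already handled in Theorem \ref{mon_amenable}, using Lemma \ref{A/N} as the bridge. The point is that Theorem \ref{mon_amenable} only applies to right prolongable monomial algebras, so for an arbitrary monomial algebra $A$ I must first quotient out the obstruction to prolongability, namely the span $N=\Span_K S$ of monomials that are not infinitely right prolongable, and then lift the monomial invariant subspaces back up.

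First I would dispose of the case $|S|=\infty$: by the first bullet of Lemma \ref{A/N}, the algebra $A$ then contains an infinite-dimensional monomial subspace $N_0$ all of whose subspaces are $(V,\varepsilon)$-invariant for \emph{every} $V$ and $\varepsilon$. So any finite-dimensional subspace of $N_0$ is a monomial $(V,\varepsilon)$-invariant subspace, giving both conclusions of the corollary immediately (and in fact making $A$ exhaustively amenable regardless of any hypothesis). Thus I may assume $|S|=s<\infty$, so that $A/N$ is a finitely-presented right prolongable monomial algebra with the same growth behavior as $A$ up to the finite-dimensional correction $N$.

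Next, under the assumption that $A$ is amenable as a right module over itself, I would argue that $A/N$ is amenable as a right module over itself as well; this is where I would invoke \cite[Proposition~3.8]{Ara_BMS} (already used inside the proof of Lemma \ref{A/N}), which transfers amenability between $A$ and the quotient $A/N$ by a finite-codimensional ideal. Since $A/N$ is right prolongable, Theorem \ref{mon_amenable} applies and yields a nonzero monomial $u \in A/N$ with at most one continuation of each prescribed length; running the $(2)\implies(3)$ construction produces monomial $(\overline{V},\varepsilon/2)$-invariant subspaces $L \le A/N$ of arbitrarily large dimension. Finally I lift: setting $\widehat{L}=L+N \le A$ gives a monomial subspace (both $L$ and $N$ are monomial), and the dimension estimate of Lemma \ref{A/N}, namely $\dim_K \widehat{L}V \le \dim_K L\overline{V}+s < (1+\varepsilon)\dim_K \widehat{L}$ once $\dim_K L > 2s/\varepsilon$, shows $\widehat{L}$ is a monomial $(V,\varepsilon)$-invariant subspace of $A$. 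The furthermore clause on Følner sequences follows by running the same lifting argument across a cofinal sequence of $(V,\varepsilon)$ and stringing the resulting $\widehat{L}$'s into an ascending monomial chain exhausting $A$, exactly as in the last paragraph of the proof of Lemma \ref{A/N}.

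I expect the main obstacle to be purely bookkeeping rather than conceptual: the corollary is essentially a repackaging of Theorem \ref{mon_amenable} and Lemma \ref{A/N}, so the real content has already been established. The one subtle point to verify carefully is that plain amenability of $A$ (not exhaustive amenability) suffices to push the construction through — here the equivalence $(1)\iff(4)$ in Theorem \ref{mon_amenable} for the prolongable quotient $A/N$ is doing the heavy lifting, upgrading mere amenability of $A/N$ to the existence of monomial invariant subspaces of every large dimension, which is exactly what the lifting step consumes.
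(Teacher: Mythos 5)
Your treatment of the $|S|=\infty$ case and of the exhaustive-amenability claim matches the paper: Lemma \ref{A/N} is exactly the transfer-and-lift mechanism used there, and your lifting estimate is the one from its proof. The genuine gap is in the plain-amenability claim when $0<|S|<\infty$. You assert that amenability (not exhaustive) of $A$ passes to $A/N$; but \cite[Proposition~3.8]{Ara_BMS}, as invoked in Lemma \ref{A/N}, is used only for \emph{exhaustive} amenability, and the corresponding transfer for plain amenability is false: a $(V,\varepsilon)$-invariant subspace of $A$ may lie entirely inside $N$, in which case it has no nonzero image in $A/N$. Concretely, take
$$A=K\left<x,y,z\right>/\left<xz,\,yz,\,zz,\,zx,\,zy\right>.$$
Here $S=S_0=\{z\}$ and $N=Kz$, and $zV\subseteq Kz$ for every finite-dimensional $V\leq A$, so $L=Kz$ witnesses that $A$ is amenable as a right module over itself. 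Yet $A/N\cong K\left<x,y\right>$ is the free algebra, which is not amenable as a right module over itself (condition (2) of Theorem \ref{mon_amenable} already fails for $D=1$). So your reduction step breaks down precisely in the one case where plain amenability is genuinely weaker than exhaustive amenability — the case your final paragraph flags as the subtle point, but whose proposed resolution does not work.

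The paper avoids any transfer of plain amenability to the quotient. It observes that when $0<|S|<\infty$ the set $S_0$ of non-right-prolongable monomials is nonempty (a monomial of maximal length in $S$ lies in $S_0$), and $N_0=\Span_K S_0$ is a nonzero, finite-dimensional, monomial subspace with $N_0V\subseteq N_0$; hence $N_0$ itself is a monomial $(V,\varepsilon)$-invariant subspace for every $V$ and $\varepsilon$, with no hypothesis on $A/N$ needed (indeed, in my example above this is the only kind of invariant subspace available). The amenability hypothesis of the corollary is actually consumed only in the remaining case $|S|=0$, where $A$ is right prolongable and Theorem \ref{mon_amenable} applies directly. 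Replacing your transfer step by this observation repairs the proof; everything else in your argument is sound.
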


\begin{proof}
Let $S$ be the set of monomials in $A$ which are not infinitely right prolongable. % If $S=\emptyset$ then $A$ is prolongable, and we are done by Theorem \ref{mon_amenable}.
If $|S|<\infty$, then we are done (in the exhaustively amenable case) by Lemma \ref{A/N}.
If $|S|=\infty$ then we are again done by Lemma \ref{A/N}, noticing that $N_0$ therein is spanned by monomials.
% Otherwise, let $L$ be a non-zero subspace of $\Span_K S$ spanned by monomials. Then for arbitrary $V\leq A$ and $\varepsilon>0$ we have: $$ \dim_K LV \leq \dim_K L < (1+\varepsilon)\dim_K L $$ so $L$ is a $(V,\varepsilon)$-invariant monomial subspace.
% To complete the picture, let us show that the claim remains true when amenability assumes that the Følner spaces cover the whole algebra (as in \cite{CSSV_JMS,CSSV_Expo}). 
% Notice that if $S$ is infinite, we still get a sequence of invariant Følner spaces with unbounded dimensions. 

To complete the picture, in view of Theorem \ref{mon_amenable}, the claim on (non-exhaustive) amenability becomes relevant only when $0<|S|<\infty$. In this case, the space $N_0=\Span_K S_0$ spanned by the (non-empty) set of non-prolongable monomials $S_0\subseteq S$ is a $(V,\varepsilon)$-invariant subspace for all $V\leq A$ and $\varepsilon>0$.
% it readily follows that the set of monomials which are not infinitely prolongable is also finite; denote this set $S'$ and suppose $|S'|=s$. Then $N=\Span_K S'\triangleleft A$ is an ideal. We claim that $A/N$ is amenable: given a finite-dimensional subspace $V\leq A/N$ and $\varepsilon>0$, consider $W=V+N\leq A$. Then there exists $E\leq A$ such that $\dim_K WE<\left(1+\frac{\varepsilon}{2}\right)\dim_K E$, and $\dim_K E$ can be taken to be arbitrarily large (taking arbitrarily smaller $\varepsilon$ if necessary), so: $$ \dim_K V\overline{E}<\left(1+\frac{\varepsilon}{2}\right)\dim_K E \leq \left(1+\frac{\varepsilon}{2}\right)\dim_K \overline{E} + s \leq (1+\varepsilon)\dim_K \overline{E} $$
% so $A/N$ is a prolongable amenable algebra, so by Theorem \ref{mon_amenable} it admits a sequence of monomial Følner spaces with unbounded dimensions, which can be lifted to $A$ as in the above argument.
\end{proof}

This affirmatively answers a question posed in \cite[Page 161]{CSSV_Expo}.

\subsection{Isoperimetric profiles}

Let $G$ be a group generated by a finite subset $S$. The \textit{isoperimetric profile} (defined by Gromov \cite{Groiso} and Vershik \cite{Vershik}) of $G$ with respect to $S$ is the function measuring the minimum $S$-boundary of a size-$n$ subset of $G$:

$$ I_{G,S}(n)=\inf_{|X|=n} |\partial_S(X)| $$
where the boundary of $X$ with respect to $S$ is $\partial_S(X)=\bigcup_{s\in S} Xs \setminus X$.

This is an important measurement in geometric group theory \cite{Ers,Gro,Groiso,Pit,Vershik,Zuk}. By analogy with the group-theoretic setting, Gromov \cite{Gro} defined and studied isoperimetric profiles of algebras, which were further studied by D'Adderio \cite{DAdderio}.
Let $A$ be an algebra generated by a finite-dimensional subspace $V$ and suppose $1\in V$. The isoperimetric profile of $A$ with respect to $V$ is the function:
$$ I_{A,V}(n) = \inf_{\substack{W\leq A \\ \dim_F W = n}} \{ \dim_F WV/W \}.$$
(See definitions in \cite[Page~180]{DAdderio}.) A finitely generated algebra is exhaustively amenable as a module over itself if and only if\footnote{We say that $f\preceq g$ if $\forall n,\ f(n)\leq c_1g(c_2n)$ for some $c_1,c_2>0$, and $f\prec g$ if $f\preceq g$ but $g\not\preceq f$. Two functions are asymptotically equivalent, $f \sim g$, if $f\preceq g$ and $g\preceq f$.} $I_{A,V}(n) \prec n$.

The proof of Theorem \ref{mon_amenable} immediately yields the following result.

\begin{cor}
Let $A$ be a prolongable monomial algebra. Then its isoperimetric profile is constant; that is,~$I_{A,V}\sim O(1)$. 
\end{cor}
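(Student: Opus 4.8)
The plan is to extract the isoperimetric statement directly from the Følner construction in the proof of Theorem~\ref{mon_amenable}, specifically the implication $(2)\implies(3)$, rather than re-deriving anything from scratch. First I would observe that a prolongable monomial algebra is in particular right prolongable, so Theorem~\ref{mon_amenable} applies; I then need to verify that condition~(2) holds. But condition~(2) is actually immediate for a genuinely prolongable algebra of interest, and more to the point the construction in $(2)\implies(3)$ already produces, for a fixed generating subspace $V$ supported on monomials of length at most $R$ and any target $\varepsilon$, a monomial subspace $L$ of dimension $N+1$ with $\dim_K LV \le \dim_K L + R$. The key numerical observation is that the additive defect is the \emph{constant} $R$, depending only on $V$ and not on the size of $L$.

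The heart of the argument is therefore the estimate
\begin{equation}
I_{A,V}(n) \;=\; \inf_{\dim_K W = n}\frac{\dim_K WV}{\dim_K W} \;\le\; \frac{n+R}{n},
\end{equation}
obtained by plugging the subspaces $L$ from the $(2)\implies(3)$ construction (taking $N=n-1$) into the infimum defining $I_{A,V}$. Since $R$ is a fixed constant once $V$ is chosen, the right-hand side tends to $1$ as $n\to\infty$, and in fact $I_{A,V}(n)\le 1 + R/n = 1 + O(1/n)$. Because $\dim_K WV \ge \dim_K W$ always holds (as $1\in V$ forces $W\subseteq WV$), we also have the trivial lower bound $I_{A,V}(n)\ge 1$. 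Combining these two bounds gives $1 \le I_{A,V}(n) \le 1 + R/n$, so $I_{A,V}(n)$ is squeezed to the constant function value $1$ asymptotically, which is precisely the assertion $I_{A,V}\sim O(1)$ under the paper's equivalence relation.

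One point that needs a moment of care is the quantifier on $D$ in condition~(2): the construction requires, for each length $N+R$, a monomial $u$ with a \emph{unique} nonzero length-$(N+R)$ continuation. For a two-sided prolongable algebra this is not automatic from prolongability alone, so I would either invoke condition~(2) as already established for the algebras under consideration, or—more cleanly—note that the corollary is stated as following ``immediately'' from the proof, meaning we assume $A$ satisfies the hypotheses making that proof go through (equivalently, $A$ is amenable over itself so that $(1)\implies(2)$ supplies the needed monomials). The genuinely mechanical remaining step is just to confirm that the subspaces $L$ realize arbitrary dimensions $n$ as $N$ ranges over the naturals, so that the infimum in $I_{A,V}(n)$ is taken over a nonempty family for every $n$; this follows since $\dim_K L = N+1$ and $N$ can be any natural number while $u\in U(N+R)$ exists by~(2).

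The main obstacle I anticipate is not analytic but definitional: ensuring that the additive bound $R$ from the theorem's proof translates correctly into the \emph{multiplicative} isoperimetric ratio and that the $\sim O(1)$ notation is interpreted as ``bounded by a constant independent of $n$'' in the sense of the paper's $\preceq$ relation. Once the ratio $(n+R)/n \to 1$ is in hand, there is nothing further to prove; the difficulty is purely in checking that the construction's $R$ really is uniform in the dimension of the approximating subspace, which the explicit form $LV\subseteq L+\Span_K\{uw_{N+1},\dots,uw_{N+R}\}$ makes transparent.
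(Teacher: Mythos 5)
Your overall strategy is exactly the paper's: the corollary is read off from the $(2)\implies(3)$ construction in the proof of Theorem~\ref{mon_amenable}, whose defect $R$ depends only on $V$ and not on the dimension of the approximating subspace, and you are also right to flag that prolongability alone does not give condition~(2) (the free algebra is prolongable), so the construction is available precisely when $A$ is amenable as a module over itself. However, there is a genuine error in your final step: you have misread the definition of the isoperimetric profile. In the paper (following D'Adderio, and in analogy with the group-theoretic boundary $|\partial_S(X)|$), $I_{A,V}(n)$ is the infimum of $\dim_F (WV/W)$, the dimension of the \emph{quotient space} $WV/W$ --- that is, of the boundary $\dim_K WV-\dim_K W$ --- and not the infimum of the ratio $\dim_K WV/\dim_K W$. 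Under your ratio reading the corollary is vacuous: since $1\in V$ and $\dim_K WV\leq \dim_K W\cdot \dim_K V$, the ratio lies in $[1,\dim_K V]$ for \emph{every} finite-dimensional subspace of \emph{every} finitely generated algebra, so ``$I_{A,V}\sim O(1)$'' would hold trivially and carry no information about $A$. It would also be inconsistent with the paper's criterion that $A$ is exhaustively amenable over itself if and only if $I_{A,V}(n)\prec n$: a universally bounded ratio would make every finitely generated algebra amenable, which is false.

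The repair is immediate and uses exactly the display you already isolated: from $LV\subseteq L+\Span_K\{uw_{N+1},\dots,uw_{N+R}\}$ one gets $\dim_K(LV/L)\leq R$, and since $\dim_K L=N+1$ realizes every dimension $n\geq 1$ as $N$ varies, this yields $I_{A,V}(n)\leq R$ for all $n$. As $R$ is a constant depending only on $V$, the profile is bounded, which is precisely $I_{A,V}\sim O(1)$ under the paper's $\preceq$ relation. In other words, no division by $n$ and no squeeze to $1$ is needed or wanted: the content of the corollary is that the boundary itself stays bounded --- a much stronger statement than sublinearity of the boundary (which is mere amenability) --- and this content is invisible in the ratio formulation.
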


D'Adderio defined (asymptotic) subadditivity as follows: a function $$f\colon \mathbb{R}_+\rightarrow \mathbb{R}_+$$ is subadditive if there exist $c_1,c_2>0$ such that $$c_1f(c_2(x_1+\cdots+x_r))\leq f(x_1)+\cdots+f(x_r)$$ for all $r$ and $x_1,\dots,x_r\in \mathbb{R}_+$. This is a version of classical subadditivity which is invariant under the standard notion of equivalence of functions in asymptotic group theory.

By analogy with the group-theoretic case, D'Adderio proved that isoperimetric profiles of domains are subadditive \cite[Theorem~2.3.5]{DAdderio} and asked whether this holds for general algebras \cite[Question~1]{DAdderio} (the isoperimetric profile is thought of as a positive real-valued function by $I_{A,V}(r)=I_{A,V}(\lfloor r \rfloor)$). We conclude this section by giving a construction that shows D'Adderio's question has a negative answer in general.

\begin{exmpl} The algebra $A$ constructed below is a finitely generated algebra whose isoperimetric profile is not asymptotically subadditive.
\end{exmpl}
Let $n_i=2^{2^{2^i}}$ for each $i\geq 3$. Consider the infinite direct product: $$ R = \prod_{i=3}^{\infty} M_{n_i}(F) $$
and let $e=(e_i)_{i\geq 3}\in R$ be the element whose components $e_i \in M_{n_i}(F)$ are the idempotent matrices with the upper left entry $1$ and all other entries zeros:
$$\left( \begin{array}{ccc}
1 & 0 & \cdots \\
0 & 0 & \cdots \\
\vdots & \vdots & \ddots
\end{array} \right).$$
Let $\sigma=(\sigma_i)_{i\geq 3}$ be the element whose components $\sigma_i \in M_{n_i}(F)$ are the permutation matrices: $$\left( \begin{array}{cc}
0 & I_{n_i-1} \\
1 & 0 \end{array} \right),$$ where $I_m$ denotes the $m$-by-$m$ identity matrix.
Inside $R$, let $A=F\left<e,\sigma\right>$; this algebra was introduced in \cite{ArbitrarySimpleModules}. Let $V=F+Fe+F\sigma$. Since for each $i$ the elements $e_i,\sigma_i$ generate the full matrix ring $M_{n_i}(F)$, it follows that $A$ is a subdirect product of matrix algebras. Moreover, as proved in \cite[Lemma~2.1]{ArbitrarySimpleModules}, the subspace $A_0=\bigoplus_{i\geq 3} M_{n_i}(F)$ of eventually zero sequences of matrices is contained in $A$.

% It follows that for each $i\geq 3$, the algebra $A$ has a left ideal of dimension $n_i$. A left ideal is an invariant subspace, so $I_{A,V}(2^{2^{2^i}})=0$ for each $i\geq 3$. SHOULD BE RIGHT?

Let $W\leq A$ be a finite-dimensional $V$-invariant subspace. Equivalently, $W$ is a right ideal (as $A=F\left<V\right>$). 
Clearly the projection of $W$ onto each component $M_{n_i}(F)$ is a right ideal (and hence is either zero-dimensional or has dimension at least $n_i$), and since $\dim_F W<\infty$ it follows that $W$ is supported on a finite number of components, say, $n_1,\dots,n_r$.
But since $A_0\subseteq A$ it follows that $W$ is a right ideal of $A_0$, so $W=\bigoplus_{i=1}^r V_i^{\oplus k_i}$ where each $V_i$ is a right ideal of $M_{n_i}(F)$ and $0\leq k_i\leq n_i$. Hence $\dim_F W=\sum_{i=1}^{r} k_i 2^{2^{2^i}}$. Let 
$$ \mathcal{S} = \Bigg\{\sum_{i=1}^{r} k_i 2^{2^{2^i}}\ |\ \forall 1\leq i\leq r,\ 0\leq k_i\leq 2^{2^{2^i}} \Bigg\}.$$ By the above, if $s\in \mathbb{N}\setminus \mathcal{S}$ then $I_{A,V}(s)>0$. Conversely, it is clear that every $s\in \mathcal{S}$ is equal to $\dim_F W$ for some right ideal $W\leq A$, and so
$$ I_{A,V}(s)=0 \iff s\in \mathcal{S}. $$
Notice that for each $i\geq 3$, the maximum number in $\mathcal{S}$ which is smaller than $n_{i+1}$ is $ n'_{i+1} := \sum_{j=1}^i n_j^2\leq in_i^2 < 2^{2^{2^i+2}}$. Thus we obtain the limit \begin{equation} \lim_{i\rightarrow \infty} n_{i+1}/n'_{i+1} = ~ \infty.
\label{eq:infty}
\end{equation}
Assume to the contrary that $I_{A,V}$ is asymptotically subadditive; in particular, for some $c_1,c_2>0$ we have $$c_1 I_{A,V}(\lfloor 2c_2x \rfloor)\leq 2I_{A,V}(x)$$ and $$c_1 I_{A,V}(\lfloor 3c_2x \rfloor)\leq 3I_{A,V}(x).$$  There are two cases to consider.
\vskip 2mm
\emph{Case 1}: $c_2<1/2$. In this case, take $x=n_i$ where $i\gg 1$ such that $\lfloor 2c_2 n_i \rfloor > n'_i$ (possible by Equation (\ref{eq:infty})). But $\lfloor 2c_2x \rfloor < x=n_i$ so $\lfloor 2c_2x \rfloor\notin \mathcal{S}$. Therefore $0 < c_1 I_{A,V}(\lfloor 2c_2x \rfloor)\leq 2I_{A,V}(x)=0$, a contradiction.
\vskip  2mm
\emph{Case 2}: $c_2>1/3$. Take $x=n'_i$ where $i\gg 1$ such that $\lfloor 3c_2n'_i \rfloor>n'_i$ and $\lfloor 3c_2n'_i \rfloor<n_i$ (again possible by Equation (\ref{eq:infty})). Then $\lfloor 3c_2x \rfloor\notin \mathcal{S}$, so $0 < c_1 I_{A,V}(\lfloor 3c_2x \rfloor)\leq 3I_{A,V}(x)=0$, a contradiction.

\section{Amenability of monomial algebras of minimal subshifts}\label{sec:amen}

% Let $X\subseteq \Sigma^{\mathbb{Z}}$ be a subshift. Associated with $A$ is a monomial algebra $A=A_X$ spanned by all finite factors of $X$ as linearly independent basis elements, with multiplication being word concatenation, extended by distributivity to the whole algebra.

% We say that $X$ is \textit{minimal} if it contains no non-empty proper subshifts, equivalently, if $X$ is the closure of the shift-orbit of a uniformly recurrent word $W\in \Sigma^{\mathbb{Z}}$.

\subsection{Minimal subshifts}
In this subsection our chief objective is to prove Theorem \ref{minimal amenable}.
The following combinatorial lemma plays a major role in the proof of this theorem.

\begin{lem} \label{combLemma}
Let $W$ be a uniformly recurrent word. Then for every $D\in \mathbb{N}$ there exists a factor $u$ of $W$ for which there is a unique word $v$ of length $D$ such that $uv$ is a factor of $W$.
\end{lem}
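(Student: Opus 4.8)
The plan is to argue by contradiction and reduce the statement to the impossibility of a certain \emph{persistently right-special} factor, which I will then rule out by exhibiting factors of $W$ that avoid a fixed subword over arbitrarily long stretches, contradicting uniform recurrence. Throughout, for a factor $u$ of $W$ write $F_D(u)$ for the set of length-$D$ words $v$ such that $uv$ is again a factor of $W$. Since $W$ is uniformly recurrent, hence recurrent, every factor is infinitely right-prolongable, so $F_D(u)\neq\emptyset$; thus the negation of the conclusion for a fixed $D$ is precisely that $\lvert F_D(u)\rvert\geq 2$ for every factor $u$.

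First I would exploit the monotonicity of follower sets under \emph{left} extension: if $wu$ is a factor then $F_D(wu)\subseteq F_D(u)$, because any right extension of $wu$ restricts to one of $u$. Consequently the integer $m:=\min_u\lvert F_D(u)\rvert$ is attained (the values lie in a finite range), and under the negation $m\geq 2$. Choosing $u^{\ast}$ with $\lvert F_D(u^{\ast})\rvert=m$, minimality forces $F_D(wu^{\ast})=F_D(u^{\ast})=:S$ for \emph{every} left extension $wu^{\ast}$. Picking distinct $v_1,v_2\in S$, letting $p$ be their longest common prefix and $a\neq b$ their first disagreeing letters, I set $g=u^{\ast}p$. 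Then for every $w$ with $wg$ a factor, both $wga$ and $wgb$ are factors of $W$, since they are prefixes of $wu^{\ast}v_1$ and $wu^{\ast}v_2$; that is, $g$ can always be followed by either of the two distinct letters $a,b$, \emph{regardless of the left context}.

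Next I would use this freedom to manufacture a one-sided infinite word $y$, all of whose factors are factors of $W$, built greedily from left to right: whenever the current suffix ends in $g$ (a \emph{choice point}) I append either $a$ or $b$, and at all other positions I append some legal letter, which exists by right-prolongability. The crucial point is that \emph{every} occurrence of $g$ in $y$ is a choice point, so an occurrence of the word $gb$ can arise only where I deliberately chose $b$. Since $W$ is recurrent, $g$ occurs infinitely often, giving infinitely many choice points; I then select $b$ only along a lacunary subset of them (say the $1$st, $2$nd, $4$th, $8$th, $\dots$) and $a$ at all others. The stretch of $y$ lying between two consecutive $b$-choices is a factor of $W$ that contains no occurrence of $gb$, yet its length tends to infinity. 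This contradicts uniform recurrence, which supplies an $N(gb)$ forcing $gb$ into every factor of length at least $N(gb)$.

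The hard part will be the extraction of the persistently right-special factor $g$: it is the monotonicity of $F_D$ under left (but not right) extension that makes the minimizer $u^{\ast}$ have a left-stable follower set, and it is exactly this left-stability that guarantees the two continuations $a,b$ survive under \emph{all} left contexts encountered during the greedy construction. One must also check that appending $b$ never creates a dead end, which is automatic because factors of a uniformly recurrent word are infinitely right-prolongable, and that $gb$ genuinely occurs only at the chosen points, which rests on the elementary observation that every occurrence of $g$ is itself a choice point. I expect that only the very last step actually uses uniform recurrence rather than mere recurrence, which fits the hypothesis exactly.
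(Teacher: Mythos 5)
Your argument is essentially correct, and it takes a genuinely different route from the paper's. The paper first settles $D=1$ (if every factor had two one-letter right prolongations one could build arbitrarily long factors avoiding a fixed letter, contradicting uniform recurrence), and then reduces general $D$ to this case by recoding $W$ into length-$D$ blocks at each of the $D$ phases, proving along the way the nontrivial claim that each recoded word $W_j$ over the block alphabet is again uniformly recurrent, and finally running a minimality argument on the sets $S_0(v)$ of occurrence positions modulo $D$. You avoid the recoding entirely: your minimality argument is applied to follower sets $F_D(u)$ rather than to phase sets, and the monotonicity $F_D(wu)\subseteq F_D(u)$ under left extension correctly yields a factor $g$ and two letters $a\neq b$ that can follow $g$ in \emph{every} left context. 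Your greedy construction with lacunary $b$-choices then generalizes the paper's $D=1$ avoidance trick directly to arbitrary $D$: the stretches between consecutive $b$-choices are arbitrarily long factors of $W$ omitting $gb$, contradicting uniform recurrence. This is shorter and more elementary than the paper's proof; what the paper's route buys in exchange is the auxiliary fact (of independent interest) that block recodings of uniformly recurrent words are uniformly recurrent.

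One justification does need repair. You write that infinitely many choice points exist ``since $W$ is recurrent, $g$ occurs infinitely often.'' Recurrence of $W$ controls occurrences of $g$ in $W$, not in your constructed word $y$; with the rule ``append some legal letter'' at non-choice points, nothing in your argument as written forces $g$ ever to recur in $y$. The fix is immediate but uses uniform recurrence: every factor of $y$ is a factor of $W$, so every factor of $y$ of length at least $N(g)$ contains $g$, hence $g$ occurs in $y$ with gaps bounded by $N(g)$ and the choice points are indeed infinite in number. Consequently your closing expectation that ``only the very last step actually uses uniform recurrence'' is not quite right: uniform recurrence is used twice, once to force $g$ to recur in $y$ and once to force $gb$ into long factors. (If you insisted on using only recurrence at the intermediate step, you could steer instead: the current word $c$ is a factor of $W$, and by recurrence $czg$ is a factor of $W$ for some word $z$, so append $z$ and reach the next choice point; but since uniform recurrence is the hypothesis, the simpler fix suffices.)
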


\begin{proof}
First, observe that the claim holds for $D=1$. If every factor in $W$ has at least two distinct (right) prolongations by a letter, we can construct an arbitrarily long factor avoiding an arbitrary letter from the alphabet, contradicting uniform recurrence.

Now, for $D>1$, assume to the contrary that $W$ is a uniformly recurrent word in which every factor has at least two distinct length-$D$ (right) prolongations. Let $u_1,\dots,u_m$ be the distinct length-$D$ factors of $W$. Without loss of generality, we may replace $W$ by a right-infinite word (this is ensured by recurrence) supported on $\mathbb{N}\cup \{0\}$, namely, $W=W[0]W[1]W[2]\cdots$.

For each $0\leq j \leq D-1$ we can uniquely decompose: $$ W = v_j u_{f(1,j)} u_{f(2,j)} u_{f(3,j)} \cdots $$
where $v_j$ is the length-$j$ prefix of $W$ and $f\colon \mathbb{N}\times \{0,1,\dots,D-1\}\rightarrow \{1,\dots,m\}$ is some function.

Let $\mathcal{T}=\{z_1,\dots,z_m\}$ be a new alphabet. Define right-infinite words over it as follows:
$$ W_j = z_{f(1,j)} z_{f(2,j)} \cdots \ \ \ \text{for each}\ 0\leq j \leq D-1 $$
\textit{Claim}: The words $W_0,\dots,W_{D-1}\in \mathcal{T}^{\mathbb{N}}$ are uniformly recurrent. \\
\textit{Proof of the claim}: Given a factor $v$ of $W$, we let: 
\begin{eqnarray*}
S(v) & = & \{p\in \mathbb{N}\ |\ W\ \text{has a copy of}\ v\ \text{starting at the}\  p\text{-th position} \} \\
S_0(v) & = & \{ p\in \{0,1,\dots, D-1\}\ |\ (p+D\mathbb{Z})\cap S(v)\neq \emptyset \}.
\end{eqnarray*}
Given a length-$Dn$ factor $v$ of $W$, we can write $v=u_{i_1}\cdots u_{i_n}$. Let $S_0(v)=\{p_1,\dots,p_r\}$ where $p_1<\cdots <p_r$.
Let $W'$ be a finite prefix of $W$ in which $v$ occurs in positions $p'_1,\dots,p'_r$ such that $p'_i\equiv p_i \mod D$ for all $1\leq i \leq r$. Since $W$ is uniformly recurrent, there exists $N$ such that every length-$ND$ factor of $W$ contains a copy of $W'$. It follows that every occurrence of $W'$ in $W$ contains occurrences of $v$ at the positions (in $W$) $p'_1+s,\dots,p'_r+s$ for some $s\geq 0$.
By definition, for each $1\leq i \leq r$ we have: $p'_i+s\equiv p'_{\sigma(i)} \mod D$ for some $\sigma\colon \{1,\dots, r\}\rightarrow \{1,\dots,r\}$. But clearly $\sigma$ is injective, hence a permutation.

Fix $0\leq j \leq D-1$ and let $z_{i_1}\cdots z_{i_n}$ be an arbitrary factor of $W_j$. Then there is some $l\geq 0$ such that: $$ W[j+lD,j+lD+nD-1] = u_{i_1}\cdots u_{i_n}, $$ call this factor (of $W$) $v$. Thus there exists $p_k\in S_0(v)$ such that $p_k\equiv j \mod D$.

Let $y$ be an arbitrary length-$N$ factor of $W_j$, starting at some position $q$, namely, $y=W_j[q,q+N-1]$. Consider: $$\hat{y}=W[j+qD,j+qD+ND-1]=u_{\alpha_1}\cdots u_{\alpha_N}$$

By definition of $N$, there is an occurrence of $W'$ in $\hat{y}$, and by the above argument (that $\sigma$ is a permutation) there is an occurrence of $v$ in $\hat{y}$ which starts at a position (in $W$) which is congruent to $p_k$, and hence to $j$, modulo $D$. It follows that:
$$ u_{\alpha_t}\cdots u_{\alpha_{t+n-1}} = v $$
for some $1\leq t \leq N-n+1$ and consequently:
$$ z_{\alpha_t}\cdots z_{\alpha_{t+n-1}} = z_{i_1}\cdots z_{i_n} $$
factors $y$, thus proving that $W_j$ is uniformly recurrent. The claim is now proven.

We now turn to complete the proof of the lemma. We assume to the contrary that $W$ is a uniformly recurrent word in which for some $D>1$ every factor has at least two distinct right length-$D$ prolongations.
Let $v=u_{i_1}\cdots u_{i_n}$ be a (length-$Dn$) factor of $W$ such that $S_0(v)$ is minimal with respect to inclusion among all factors of length divisible by $D$.

For each $1\leq k \leq m$, such that $vu_k$ factors $W$, we have $S_0(vu_k)\subseteq S_0(v)$ and by minimality, $S_0(vu_k)=S_0(v)$. Similarly, $S_0(u_kv)=S_0(v)$ (whenever $u_kv$ factors $W$).

Pick $j\in S_0(v)$. Since $W_j$ is uniformly recurrent (by the above claim), and by the validity of the lemma for $D=1$, there exists a factor $y$ of $W_j$ with a unique length-$1$ (over the alphabet $\mathcal{T}$) right prolongation.

Since $j\in S_0(v)$, there is an occurrence of $z_{i_1}\cdots z_{i_n}$ in $W_j$. By recurrence of $W_j$, it has a factor of the form: $$ z_{i_1}\cdots z_{i_n} y' y $$ for suitable $y'$. 
Since $y$ has a unique length-$1$ prolongation, then so does $ z_{i_1}\cdots z_{i_n} y' y $, say, \begin{equation} \label{eq:zz}
z_{i_1}\cdots z_{i_n} y' y z_c \end{equation} is its unique length-$1$ prolongation (for some $1\leq c \leq m$).
Consider the length-$D(n+|y'|+|y|)$ factor $g$ of $W$ corresponding to $ z_{i_1}\cdots z_{i_n} y' y $. Since $v$ is a prefix of $g$, we have $S_0(g)=S_0(v)$. By Equation (\ref{eq:zz}), we see that $gu_c$ is a length-$D$ prolongation of $g$.
But by assumption, there is at least one additional length-$D$ prolongation for $g$, say, $gu_d$ for some $1\leq d \leq m$ and $d\neq c$. We claim that $j\notin S_0(gu_d)$, for otherwise (replacing each $u_i$ by the corresponding $z_i$) we would get that $ z_{i_1}\cdots z_{i_n} y' y z_d$ factors $W_j$, contradicting the uniqueness in Equation (\ref{eq:zz}). Hence $j\in S_0(v)\setminus S_0(gu_d)$. Since: $$ S_0(gu_d)\subseteq S_0(g)=S_0(v) $$ we get a contradiction to the minimality of $S_0(v)$.
\end{proof}

\begin{proof}[{Proof of Theorem \ref{minimal amenable}}]
We prove the theorem for right modules, and the left module case is equivalent (using a left-wise counterpart of Lemma \ref{combLemma}). Let $X$ be a minimal subshift and let $$A=A_X=K\left<x_1,\dots,x_d\right>/I$$ be the corresponding monomial algebra. Let $A_i$ be the subspace spanned by length-$i$ monomials.
Let $M$ be a right $A$-module, $V\leq A$ a finite-dimensional subspace and $\varepsilon>0$. 
Let $R$ be an upper bound on the lengths of all monomials on which the elements of $V$ are supported. % If $\dim_K M<\infty$, it is clear that $M$ is exhaustively amenable, so assume $\dim_K M=\infty$.

Let $C=\lceil \frac{R}{\varepsilon}\rceil$. Take $D=C+R$. By Lemma \ref{combLemma}, there exists a factor $u$ of $X$ with a unique length-$D$ right prolongation, say, $ux_{i_1}\cdots x_{i_D}$. By minimality of $X$, there exists some $N$ such that every length-$N$ monomial in $A$ is divisible by $ux_{i_1}\cdots x_{i_D}$. Given a length-$N$ monomial $v\in A$, let us write $v=v_0ux_{i_1}\cdots x_{i_D}v'$ for some $v_0,v'$.

\textit{Case 1}: For every $\varepsilon > 0$ there exist $\xi \in M$, and a monomial $v\in A$ of length $N$ such that the set:
$$ \{\xi\cdot v_0u,\xi\cdot v_0ux_{i_1},\dots,\xi\cdot v_0ux_{i_1}\cdots x_{i_C}\} $$ 
is linearly independent (that is, its span -- call it $L_C$ -- is $(C+1)$-dimensional).

Consider an arbitrary element $f\in V$. Then $f$ is a linear combination of monomials of length at most $R$, so for each $1\leq j\leq C$ we have that: $$ \xi\cdot v_0ux_{i_1}\cdots x_{i_j}f\in \Span_K \{\xi\cdot v_0u,\xi\cdot v_0ux_{i_1},\dots,\xi\cdot v_0ux_{i_1}\cdots x_{i_D}\} =: L_D, $$ (and also $\xi\cdot v_0uf \in L_D$) hence $L_CV\subseteq L_D$.
Thus:
\begin{eqnarray*}
\dim_K L_C V &  \leq & \dim_K L_D\leq D+1 \\ & = & C+R+1<(1+\varepsilon)(C+1) \\ & = & (1+\varepsilon)\dim_K L_C
\end{eqnarray*}
so there is a $(V,\varepsilon)$-invariant subspace. Moreover, $\dim_K L_C\xrightarrow{\varepsilon\rightarrow 0}\infty$, so we get exhaustive amenability of $M$.

\textit{Case 2}: There exists $\varepsilon > 0$ such that for every $\xi \in M$ and for every monomial $v\in A$ of length $N$, the set:
$$ \{\xi\cdot v_0u,\xi\cdot v_0ux_{i_1},\dots,\xi\cdot v_0ux_{i_1}\cdots x_{i_C}\} $$
is linearly dependent.
In particular, either $\xi\cdot v_0u=0$, so $\xi \cdot v=0$, or for some $1\leq j\leq C$, we can write:
$$ \xi\cdot v_0ux_{i_1}\cdots x_{i_j} = \xi \cdot \sum_{l=1}^{j-1} \gamma_l v_0ux_{i_1}\cdots x_{i_l} + \gamma'\xi\cdot v_0u. $$
So:
\begin{eqnarray*}
\xi\cdot v = \xi\cdot v_0ux_{i_1}\cdots x_{i_D}v' & = & \xi \cdot \sum_{l=1}^{j-1} \gamma_l v_0ux_{i_1}\cdots x_{i_l}x_{i_{j+1}}\cdots x_{i_D}v' \\ & + & \gamma'\xi\cdot v_0ux_{i_{j+1}}\cdots x_{i_D}v'
\end{eqnarray*}
for some scalars $\gamma',\gamma_1,\dots, \gamma_{j-1}$.
Hence: $$\xi\cdot v\in \xi\cdot \bigoplus_{i=0}^{N-1} A_i,$$ and it follows that $\xi\cdot A_N\subseteq \bigoplus_{i=0}^{N-1} \xi\cdot A_i$. It follows that the $A$-submodule of $M$ generated by any $0\neq \xi \in M$ is  a non-zero, finite-dimensional subspace, which is thus $(V,\varepsilon)$-invariant (for any $\varepsilon>0$).
It follows that $M$ is an exhaustively amenable right $A$-module.
\end{proof}

As an immediate consequence we obtain the following result.

\begin{cor} \label{cor}
Let $X$ be a minimal subshift of positive entropy. Then $A_X$ is a projectively simple, graded algebra of exponential growth, all of whose modules are exhaustively amenable.
\end{cor}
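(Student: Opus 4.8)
The plan is to assemble this corollary from the bijective correspondences recorded in \S\ref{sec:subshift} together with \Tref{minimal amenable}, the only genuinely new ingredient being the translation of positive topological entropy into exponential growth of the algebra. First I would recall that every monomial algebra is graded by total degree, so that $A_X=\bigoplus_{i=0}^{\infty}A_i$ with $A_i$ spanned by the length-$i$ factors of $X$; this already yields the assertion that $A_X$ is graded. Next, since $X$ is minimal, the correspondence between minimal subshifts and projectively simple monomial algebras displayed in \S\ref{sec:subshift} places $A_X$ in the class of projectively simple algebras, settling that claim (and in particular its primeness).

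For the growth statement, I would observe that the distinct length-$i$ factors of $X$ constitute a $K$-basis of $A_i$, so $\dim_K A_i$ equals the factor complexity $p_X(i)$ of the subshift. By definition the topological entropy of $X$ is
$$ h(X)=\lim_{i\to\infty}\frac{1}{i}\log p_X(i), $$
and the hypothesis $h(X)>0$ forces $p_X(i)$ to grow exponentially in $i$. On the other hand $p_X(i)\le d^{\,i}$ for the $d$-letter alphabet, so the growth function $\gamma_{A_X}(n)=\dim_K\sum_{i=0}^{n}A_i$ is squeezed between two exponentials and $A_X$ therefore has exponential growth.

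Finally, exhaustive amenability of every (left or right) $A_X$-module is precisely the conclusion of \Tref{minimal amenable}, which applies because $X$ is minimal. Concatenating these three observations establishes the corollary. I do not anticipate any real obstacle: all the substance lives in \Tref{minimal amenable}, and the remaining work is bookkeeping, the only point demanding a little care being the (standard) identification of $\dim_K A_i$ with the factor complexity $p_X(i)$ together with its relation to entropy. One should simply make sure to read ``positive entropy'' as $\limsup_{i}\tfrac{1}{i}\log p_X(i)>0$, so that the lower exponential bound on $\gamma_{A_X}$, and hence exponential growth, follows at once.
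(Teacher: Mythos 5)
Your proposal is correct and matches the paper's treatment: the paper states this corollary as an immediate consequence of Theorem~\ref{minimal amenable} together with the correspondences of \S\ref{sec:subshift}, and your bookkeeping (grading by total degree, projective simplicity from the minimal-subshift correspondence, and exponential growth from $\dim_K A_i = p_X(i)$ and positive entropy) is exactly the implicit content. The only remark worth adding is that for subshifts the limit defining entropy exists by subadditivity of $\log p_X$, so the $\limsup$ caveat at the end is unnecessary.
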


Notice that explicit minimal subshifts of positive entropy were constructed in \cite{Min_entropy}.

\subsection{Transitive subshifts with asymmetric amenability}

\begin{exmpl} The following gives a transitive subshift without isolated points whose associated monomial algebra is left exhaustively amenable, but not right amenable. \label{asymmetry}
\end{exmpl}
Consider the alphabet $\Sigma=\{w,x,y,z\}$. Let $\mathcal{L}\subset \Sigma^{*}$ be the language consisting of all words avoiding factors of the form:
$$ Vxy^{2i}x\ \text{where}\ V\ \text{is not a suffix of}\ wz^{2i}w\ \text{and}\ |V|\leq 2i+2 $$
for all $i\in \mathbb{N}$. Equivalently, if $U\in \mathcal{L}$ and $U=U_0xy^{2i}xU_1$ for some $i$ then either $U_0$ has $wz^{2i}w$ as a suffix or $|U_0|<2i+2$ and $U_0$ is a suffix of $wz^{2i}w$. Clearly $\mathcal{L}$ is a hereditary language.

First, we claim that $\mathcal{L}$ is the language of finite factors of some recurrent word. Let $U_1,U_2\in \mathcal{L}$. Consider the leftmost occurrence of $xy^{2i}x$ in $U_2$ (if it exists), then $U_2=U_3xy^{2i}xU_4$ where $U_3$ either has $wz^{2i}w$ as a suffix, or else $|U_3|<2i+2$ and $U_3$ is a suffix of $wz^{2i}w$. In the first case, define: $$ W=z^{|U_1|+|U_2|+1} $$ and in the latter case take $W$ such that $WU_3=wz^{2i}w$.

We claim that $U_1WU_2\in \mathcal{L}$. For if we had an occurrence of a forbidden factor of the form $Vxy^{2i}x$ as above within $U_1WU_2$, it must have occurred within $U_2$. Any such occurrence which is not the leftmost occurrence cannot be of the forbidden form.
Fix the leftmost such occurrence. Then by construction, it has an occurrence of $wz^{2i}w$ consecutively to its left, so again there is no forbidden factor.

It follows that there is a recurrent word $\Omega\in \Sigma^{\mathbb{Z}}$ whose set of finite factors coincides with $\mathcal{L}$. The closure of the shift-orbit of $\Omega$, say, $X$ is a transitive subshift without isolated points. Let $A=A_X$ be the monomial algebra associated with $X$.

Any factor $U$ of $\Omega$ can be prolonged to the right by both $w$ and $z$. Therefore, by Theorem \ref{mon_amenable}, $A$ is non-amenable as a right module over itself. But for any $D\in \mathbb{N}$, there exists a factor of $\Omega$ (namely, $xy^{2D}x$) which can be prolonged in a unique way to the left by a length-$D$ factor (since any length-$\leq 2D+2$ factor appearing consecutively to the left of $xy^{2D}x$ must be a suffix of $wz^{2D}w$), so $A$ is an exhaustively amenable left module over itself.

% Let $A=k\left<w,x,y,z\right>/I$, where $I$ is generated by monomials of the form $vxy^{2i}x$ for $v$ of length $\leq 2i+2$ which is not a suffix of $wz^{2i}w$. It is not hard to show that $A$ is prime, and therefore its non-zero monomials give rise to a recurrent, bi-infinite word $u\in \{w,x,y,z\}^{\mathbb{Z}}$. Any factor can be prolonged to the right by $y^2$ and either $xy$ or $yx$, so by the above criterion $A$ is non-amenable as a right module over itself. But for any $D>0$, there exists a factor $xy^{2D}x$ which can be prolonged in a unique way to the left by a length-$D$ factor, so $A$ is an amenable left module over itself.

\begin{rem}
The above example contains a noncommutative free monomial subalgebra, e.g.~$K\left<x,xy\right>$. In Section \ref{sec:free} we shall see that conversely, if a monomial algebra contains no noncommutative free subalgebras then it is in fact an exhaustively amenable module over itself.
\end{rem}

\section{Convolution algebras}\label{sec:convo}

Let $X\subseteq \Sigma^\mathbb{Z}$ be a transitive aperiodic subshift, where $\Sigma=\{x_1,\dots,x_d\}$ is a fixed alphabet. Denote the shift operation by $T$.
% In the current section we focus on the case of a transitive subshift, $X=\overline{\{T^i(w)\}}_{i\in \mathbb{Z}}$ for wome $w\in \Sigma^{\mathbb{Z}}$.

\subsection{Convolution algebras and monomial subalgebras}
The groupoid $\mathfrak{G}_{X}$ of the action %\footnote{In the case of a general group action $G\curvearrowright X$ one considers \textit{equivalence classes} of elements where two elements (called germs) $(g,x)\sim(g',x)$ are equivalent if $g,g'$ identify on an open neighborhood of $x$.} 
 $\mathbb{Z} \curvearrowright X$ is the groupoid whose elements are $\mathbb{Z}\times X$ and (partial) multiplication is given by:
$$(m_2, T^{m_1}(x))\cdot(m_1,x) = (m_1+m_2,x).$$
This is an \'etale Hausdorff groupoid whose space of units is totally disconnected.
One can associate with $\mathfrak{G}_X$ an associative algebra, which we denote $K[\mathfrak{G}_X]$, called the \textit{convolution algebra} of $\mathfrak{G}_X$, consisting of all continuous, compactly supported functions $f\colon\mathfrak{G}_X\rightarrow K$ (here the base field $K$ is arbitrary and endowed with the discrete topology). The multiplicative structure of $K[\mathfrak{G}_X]$ is given by convolution:
$$(f_1\cdot f_2)(g)=\sum_{\substack{h\in \mathfrak{G}_X\ \text{s.t.}\ h^{-1}g \\ \text{is well-defined}}} f_1(h)f_2(h^{-1}g).$$
% The algebra $\mathcal{O_X$ is spanned by characteristic functions of compact open bisections.
For each $1\leq i\leq d$, consider the characteristic function $1_{x_i}$ of the cylindrical set:
$$ \{u\in X\ |\ u[0]=x_i\}. $$
Then the convolution algebra is generated by these characteristic functions and the shift operator (and its inverse), namely:
$$ K[\mathfrak{G}_X] = K\left< 1_{x_1},\dots,1_{x_d},T^{\pm 1} \right>.$$
%%%%% Corrected next to last 1-->d.
Identiying $T$ with the corresponding characteristic function. Notice that $1=\sum_{i=1}^{d} 1_{x_i}$. For more on convolution algebras associated with groupoids arising from subshifts, we refer the reader to \cite{Nekrashevych}, where ring theoretic properties of $K[\mathfrak{G}_X]$ are characterized by means of dynamical properties of $X$. For instance, $K[\mathfrak{G}_X]$ is simple when $X$ is minimal \cite[Theorem~1.2]{Nekrashevych}. The algebra $K[\mathfrak{G}_X]$ is $\mathbb{Z}$-graded by $\deg(1_{x_i})=0,\ \deg(T)=1,\ \deg(T^{-1})=-1$.

Finally, let us view $K[\mathfrak{G}_X]$ as a localization of the monomial algebra $A_X$. Namely, there is an injective (graded) ring homomorphism:
$$ i\colon A_X \hookrightarrow K[\mathfrak{G}_X] $$
given by:
$$ x_i \mapsto 1_{x_i} T.$$
See \cite[Example~4.4.1]{Nekrashevych}, where $A_X$ is denoted $\mathcal{M}_\mathcal{X}$ and $K[\mathfrak{G}_X]$ is denoted $K[\mathfrak{G}_w]$ (where $X$ is the closure of the shift-orbit of $w$).

\subsection{Amenability of convolution algebras}
We are now ready to prove Theorem \ref{convo}.

%%%%%%%%% Referee's suggestion

% \begin{prop} \label{referee's} Let $A$ be an algebra, $T\in A$ and $B=A[T^{-1}]$ a localization of $A$ at $T$\footnote{That is, an algebra generated by $A$ and an inverse of $T$.}. If a $B$-module $M$ is exhaustively amenable qua $A$-module, then it is exhaustively amenable. \end{prop}

%%%%%%%%%

\begin{proof}[Proof of Theorem \ref{convo}]
By Theorem \ref{mon_amenable}, $A=A_X$ is exhaustively amenable as a right module over itself if and only if for every $D\in \mathbb{N}$ there exists a factor $u$ with a unique length-$D$ right prolongation, say, $ux_{i_1}\cdots x_{i_D}$. Then for each $r<D$ the subspace: $$L=\Span_K\{u,ux_{i_1},\dots,ux_{i_1}\cdots x_{i_r}\}$$ is $(V,1/r)$-invariant for $V=\Span_K\{1,x_1,\dots,x_d\}$, recalling that exhaustive amenability of an algebra as a module over itself can be checked with respect to a fixed generating subspace \cite[Theorem~3.4]{CSSV_Expo}.
We claim that the very same subspace is $(W,2/r)$-invariant for $W=\Span_K\{1,x_1,\dots,x_d,T^{-1}\}$. To this end, it suffice to show that $LT^{-1}\subseteq L+L_0$ where $L_0$ is a $1$-dimensional subspace. Indeed, for each $1\leq s\leq r$, we have: $$(*)\ \ \ ux_{i_1}\cdots x_{i_s}T^{-1}=ux_{i_1}\cdots x_{i_{s-1}}(x_1+\cdots+x_d)T^{-1}=ux_{i_1}\cdots x_{i_{s-1}}\in L$$
(recalling that $T=x_1+\cdots+x_d$; for $s=1$ we formally set $x_{i_{s-1}}$ to be empty). This follows since $ux_{i_1}\cdots x_{i_{s-1}}x_q$ is zero except for $q=i_s$. Hence $LT^{-1}\subseteq L+\Span_K\{uT^{-1}\}$, as required.

Now suppose that $X$ is minimal. Let $M$ be an arbitrary $K[\mathfrak{G}_X]$-module, $V\leq K[\mathfrak{G}_X]$ a finite-dimensional subspace. Elements in $K[\mathfrak{G}_X]$ are spanned by monomials of the form: $$ w = u_0T^{-i_0}u_1 \cdots T^{-i_k}u_{k+1} $$ for some monomials $u_0,\dots,u_{k+1}\in A_X$ and $i_0,\dots,i_k \geq 0$. Let us define the \textit{length} of such a monomial to be $l(w) := |u_0|+\cdots+|u_{k+1}|+i_0+\cdots +i_k$ (where $|u_i|$ is the length with respect to the letters of $X$). Let us define the \textit{degree} of such a monomial to be $\deg(w) := |u_0|+\cdots+|u_{k+1}|-i_0-\cdots -i_k$.

Let $R$ be an upper bound on the lengths of all monomials on which $V$ is supported. Let $\varepsilon > 0$ be given. Let $C=\lceil \frac{2R}{\varepsilon} \rceil$ and $D=C+2R$. By Lemma \ref{combLemma},
there exists a factor $u$ of $X$ with a unique length-$D$ right prolongation, say, $ux_{i_1}\cdots x_{i_D}$. By minimality of $X$, there exists some $N$ such that every length-$N$ monomial in $A$ is divisible by $ux_{i_1}\cdots x_{i_D}$. Given a length-$N$ monomial $v\in A$, let us write $v=v_0ux_{i_1}\cdots x_{i_D}v'$ for some $v_0,v'$.

\textit{Case 1}: For every $\varepsilon > 0$ there exist $\xi\in M$, and a monomial $v\in A$ of length $N$ such that the set:
$$ \{ \xi \cdot v_0ux_{i_1}\cdots x_{i_R},\dots,\xi \cdot v_0ux_{i_1}\cdots x_{i_{R+C}}\} $$ 
is linearly independent (that is, its span -- call it $L'_C$ -- is $(C+1)$-dimensional).

Consider an arbitrary element $f\in V$. Then $f$ is a linear combination of monomials from $K[\mathfrak{G}_X]$ of lengths at most $R$. For each monomial $w \in K[\mathfrak{G}_X]$ of length at most $R$, we have for any $0\leq j \leq C$:
$$ (**) \ \ \ ux_{i_1}\cdots x_{i_{R+j}}w = 0 \ \ \text{or}\ \ ux_{i_1}\cdots x_{i_{R+j}}w = ux_{i_1}\cdots x_{i_{R+j+\deg(w)}} $$ by the same argument as in $(*)$, noticing that $-R\leq \deg(w)\leq R$, so $(**)$ belongs to $L_D:=\Span_K \{\xi \cdot v_0u,\xi\cdot v_0ux_{i_1},\dots,\xi\cdot v_0ux_{i_D}\}$.
It follows that for each $0\leq j\leq C$ we have:
$$ \xi \cdot v_0ux_{i_1}\cdots x_{i_{R+j}}f \in L_D $$
hence $L'_CV\subseteq L_D$. Thus:
\begin{eqnarray*}
\dim_K L'_C V &  \leq & \dim_K L_D\leq D+1 \\ & = & C+2R+1<(1+\varepsilon)(C+1) \\ & = & (1+\varepsilon)\dim_K L'_C
\end{eqnarray*}
so there is a $(V,\varepsilon)$-invariant subspace. Moreover, $\dim_K L'_C\xrightarrow{\varepsilon\rightarrow 0}\infty$, so we get exhaustive amenability of $M$.

\textit{Case 2}: There exists $\varepsilon > 0$ such that for every $\xi \in M$ and for every monomial $v\in A$ of length $N$, the set:
$$ \{ \xi \cdot v_0ux_{i_1}\cdots x_{i_R},\dots,\xi \cdot v_0ux_{i_1}\cdots x_{i_{R+C}}\} $$
is linearly dependent. In particular, either $\xi \cdot v_0ux_{i_1}\cdots x_{i_R} = 0$, so $\xi \cdot v=0$, or for some $1\leq j\leq C$, we can write:
$$ \xi\cdot v_0ux_{i_1}\cdots x_{i_{R+j}} = \xi \cdot \sum_{l=0}^{j-1} \gamma_l v_0ux_{i_1}\cdots x_{i_{R+l}}. $$
So:
$$
\xi\cdot v = \xi\cdot v_0ux_{i_1}\cdots x_{i_D}v' = \xi \cdot \sum_{l=0}^{j-1} \gamma_l v_0ux_{i_1}\cdots x_{i_{R+l}}x_{i_{R+j+1}}\cdots x_{i_D}v'
$$
for some scalars $\gamma_0,\dots, \gamma_{j-1}$.
Hence: $$\xi\cdot v\in \xi\cdot \bigoplus_{i=0}^{N-1} A_i,$$ and it follows that for every $\xi \in M$, we have $\xi \cdot A_N \subseteq \bigoplus_{i=0}^{N-1} A_i$, so $\xi \cdot A$ is a finite-dimensional subspace of $M$. Hence, there exists a non-zero (e.g.~Cayley-Hamilton) polynomial of $T$ which annihilates $\xi \cdot A$.
It follows that for any
$\theta \in \xi\cdot A$ we have:
$$ \theta\cdot T^i=\sum_{p=i+1}^{i'} c_p\ \theta\cdot T^p $$ for some $i,i'$ and scalars $c_p$, so: $$ \theta\cdot T^{-1}= \sum_{p=i+1}^{i'} c_p\theta\cdot T^{p-i-1}\in \xi\cdot A. $$
Hence $\xi\cdot K[\mathfrak{G}_X]=\xi\cdot A$ is a finite-dimensional subspace.

It follows that the $K[\mathfrak{G}_X]$-submodule of $M$ generated by any $0\neq \xi \in M$ is a non-zero, finite-dimensional subspace, which is thus $(V,\varepsilon)$-invariant (for any $\varepsilon>0$).
It follows that $M$ is an exhaustively amenable right $K[\mathfrak{G}_X]$-module.
\end{proof}

%%%%%

% and let $L\leq A_X\subseteq K[\mathfrak{G}_X]$ be as above. By restriction, $M$ is also an $A_X$-module. 

% Now suppose that for infinitely many $r$'s there exists some vector $0\neq v\in M$ such that $\dim_K v\cdot L=r+1$, in which case $v\cdot L$ is a $(W,2/r)$-invariant subspace. This provides almost invariant subspaces of arbitrarily large dimensions.

% Otherwise, there exists $r$ such that for all $v\in M$ we have: $$ v\cdot u,v\cdot ux_{i_1},\dots, v\cdot ux_{i_1}\cdots x_{i_r} $$ are linearly dependent, and as in the proof of Theorem \ref{minimal amenable} this implies that the $A_X$-submodule $v\cdot A_X$ is a finite-dimensional subspace. It then follows that for any $w \in v\cdot A_X$ we have $$ w\cdot T^i=\sum_{j>i} c_j\ w\cdot T^j $$ for some $i$, so: $$ w\cdot T^{-1}= \sum_{j>i} c_j\ w\cdot T^{j-i}\in v\cdot A_X. $$ Hence $v\cdot K[\mathfrak{G}_X]=v\cdot A_X$ is a finite-dimensional subspace.

% Hence any vector in $M$ is either contained in a $(W,\varepsilon)$-invariant subspace of the form $v\cdot L$ (for $r>1/2\varepsilon$) or it belongs to a finite-dimensional submodule, which is clearly $(W,\varepsilon)$-invariant for all $\varepsilon>0$.

Finally, we observe that one cannot hope for the converse implication of the above theorem to hold, namely, $K[\mathfrak{G}_X]$ might be an exhaustively amenable module over itself while $A_X$ is not.
Specifically, consider Example \ref{asymmetry}: it gives a transitive subshift whose monomial algebra $A_X$ is an exhaustively amenable module over itself on one side but not on the other side. However, the convolution algebra $K[\mathfrak{G}_X]$ is isomorphic to its opposite $K[\mathfrak{G}_X]^{\text{op}}$ by the involution $T^*=T^{-1},\ 1_{x_i}^*=1_{x_i}$ (in fact, this is the transpose involution when $K[\mathfrak{G}_X]$ is considered as a ring of row and column finite matrices, as in \cite{Nekrashevych}), so its (exhaustive) amenability as a module over itself is left-right invariant.

\section{Free subalgebras} \label{sec:free}

A group which contains a nonabelian free subgroup is not amenable; Elek \cite{Elek_skew} proved that if a division algebra $D$ contains a non-amenable division subalgebra $E$, then $D$ is non-amenable is well. This cannot be extended to arbitrary algebras: the quotient division algebra of the first Weyl algebra has finite GK-transcendence degree, hence it is amenable, yet contains noncommutative free subalgebras (see \cite{Elek_skew}).
On the other hand, it is well known that there exist non-amenable groups which contain no free subgroups (e.g.~see \cite{Osin_Burnside}).

\subsection{Non-amenable monomial algebras contain free subalgebras} % \label{sec:free}

We are finally ready to prove Theorem \ref{no free amenable}.

%\begin{thm} \label{no free amenable}
%Let $A$ be a monomial algebra over an infinite field. If $A$ is not exhaustively amenable as a module over itself, then it contains a homogeneous free subalgebra.
%\end{thm}

\begin{proof}[Proof of Theorem \ref{no free amenable}]
% NEEDS TO BE FIXED!!!
Let $A$ be a monomial algebra with generators $\Sigma=\{x_1,\dots,x_d\}$ over an infinite field $K$, which is not exhaustively amenable as a right module over itself. If $A$ contained infinitely many non-right prolongable monomials, their span would form an invariant subspace, so $A$ would be exhaustively amenable as a module over itself. If there are only finitely many non-right prolongable monomials, there are only finitely many monomials which are not infinitely right prolongable. Let $N\triangleleft A$ be the span of all of the monomials in $A$ which are not infinitely right prolongable; thus by Lemma \ref{A/N}, $A/N$ is a prolongable monomial algebra which is not exhaustively amenable as a module over itself. Hence we may assume that $A$ itself is right prolongable, since free subalgebras lift from homomorphic images.

Suppose that $A$ is right prolongable. By Theorem \ref{mon_amenable} we deduce that for some $D\in \mathbb{N}$, every monomial $0\neq w\in A$ has at least two distinct length-$D$ right prolongations.

Let $\{\alpha_u\}_{|u|=D}$ be a set of distinct non-zero scalars from $K$. Consider:
$$ x=\sum_{|u|=D} u,\ \ \ y=\sum_{|u|=D} \alpha_u u $$
We claim that $K\left<x,y\right>\subseteq A$ is free.
Assume on the contrary that there is a non-commutative relation between $x,y$. Since $x,y$ are homogeneous of the same degree $D$, we may assume that there is a homogeneous relation, say, 
$$ \sum_{|v|=l} c_v v(X,Y) $$
which vanishes under the substitution $X\mapsto x,\ Y\mapsto y$. Assume that this is a non-trivial relation of minimum possible degree.
This can be re-written as:
$$ \sum_{w\in \{X,Y\}^{l-1}} \lambda_w w(x,y)x = \sum_{w\in \{X,Y\}^{l-1}} \lambda'_w w(x,y)y $$
where here $w$ ranges over length-$(l-1)$ monomials in $X,Y$, substituting $X\mapsto~x,\ Y\mapsto~y$, and at least one of the coefficients is non-zero (otherwise that is not a non-trivial relation). Each $w(x,y)$ can be written as a linear combination of monomials in $x_1,\dots,x_d$:
$$ w(x,y)=\sum_{m\in \Sigma^{D(l-1)}} \beta_{w,m} m. $$
Rewriting $x,y$ by means of $x_1,\dots,x_d$:
\begin{equation}\label{eq:beta} \sum_{\substack{w\in \{X,Y\}^{l-1},\ u\in \Sigma^D \\ m\in \Sigma^{D(l-1)}}} \lambda_w \beta_{w,m} mu = \sum_{\substack{w\in \{X,Y\}^{l-1},\ u\in \Sigma^D \\ m\in \Sigma^{D(l-1)}}} \lambda'_w \beta_{w,m} \alpha_u mu.\end{equation}
Let $\gamma_m = \sum_{w\in \{X,Y\}^{l-1}} \lambda_w \beta_{w,m}$ and $\gamma'_m = \sum_{w\in \{X,Y\}^{l-1}} \lambda'_w \beta_{w,m}$. Thus Equation (\ref{eq:beta}) becomes:
\begin{equation}\label{eq:gamma} 
\sum_{\substack{u\in \Sigma^D \\ m\in \Sigma^{D(l-1)}}} \gamma_m mu = \sum_{\substack{u\in \Sigma^D \\ m\in \Sigma^{D(l-1)}}} \alpha_u  \gamma'_m mu. \end{equation}
We claim that there exists $m\in \Sigma^{D(l-1)}$ with $\gamma_m\neq 0$. Otherwise, $\sum_{w\in \{X,Y\}^{l-1}} \lambda_w w(x,y) = 0$ is a homogeneous relation of smaller degree, hence it must be trivial; but then also $\sum_{w\in \{X,Y\}^{l-1}} \lambda'_w w(x,y)y=0$, and since $y$ is regular ($A$ being a prolongable monomial algebra and all $\alpha_u$ being non-zero) we get $\sum_{w\in \{X,Y\}^{l-1}} \lambda'_w w(x,y)=0$, a homogeneous relation of smaller degree, hence trivial. It follows that $ \sum_{|v|=l} c_v v(X,Y) = 0 $ is the trivial relation, a contradiction.

Pick $m\in \Sigma^{D(l-1)}$ with $\gamma_m\neq 0$.
Now by the non-amenability assumption, together with Theorem \ref{mon_amenable}, we know that $m$ has two distinct length-$D$ right prolongations, say, $u_1,u_2$ such that $mu_1,mu_2$ are non-zero. Since $A$ is a monomial algebra, it follows from Equation (\ref{eq:gamma}) that the coefficients of these monomials on both sides of the equation must coincide:
\begin{eqnarray*}
\gamma_m & = & \alpha_{u_1} \gamma'_m \\
\gamma_m & = & \alpha_{u_2} \gamma'_m.
\end{eqnarray*}
Since $\gamma_m \neq 0$, we obtain that $\alpha_{u_1}=\alpha_{u_2}$, contradicting the way we picked the scalars $\{\alpha_u\}_{|u|=D}$.
Therefore $K\left<x,y\right>\subseteq A$ is a free subalgebra, as claimed.
\end{proof}

% The converse of Theorem \ref{no free amenable} does not hold, by Example \ref{asymmetry}.

\begin{cor}
Let $X$ be a subshift. If the convolution algebra $K[\mathfrak{G}_X]$ over an infinite field is non-(exhaustively amenable) as a module over itself then it contains a noncommutative free subalgebra.
\end{cor}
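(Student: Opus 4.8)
The plan is to reduce the statement to Theorem~\ref{no free amenable} by passing through the monomial subalgebra $A_X$ and exploiting the embedding $i\colon A_X\hookrightarrow K[\mathfrak{G}_X]$ constructed in \S\ref{sec:convo} (given by $x_i\mapsto 1_{x_i}T$). Since $i$ is an injective algebra homomorphism, it identifies $A_X$ with a subalgebra of $K[\mathfrak{G}_X]$; in particular, the image under $i$ of any noncommutative free subalgebra of $A_X$ is again a noncommutative free subalgebra, now sitting inside $K[\mathfrak{G}_X]$. Thus it suffices to produce a (homogeneous) free subalgebra inside $A_X$ itself.

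To do this, I would first transfer the failure of amenability from $K[\mathfrak{G}_X]$ down to $A_X$. This is exactly the contrapositive of the first assertion of Theorem~\ref{convo}: that result states that if $A_X$ is exhaustively amenable as a module over itself then so is $K[\mathfrak{G}_X]$, so the failure of exhaustive amenability for $K[\mathfrak{G}_X]$ forces the failure of exhaustive amenability for $A_X$. Recall, moreover, that as observed at the end of \S\ref{sec:convo} the involution $T^*=T^{-1}$, $1_{x_i}^*=1_{x_i}$ makes $K[\mathfrak{G}_X]$ isomorphic to its opposite, so its exhaustive amenability as a module over itself is a left--right symmetric property; hence the hypothesis of the corollary is unambiguous even though Theorem~\ref{convo} is phrased for right modules.

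With $A_X$ now known to be a monomial algebra over the infinite field $K$ that is not exhaustively amenable as a module over itself, Theorem~\ref{no free amenable} applies verbatim and yields a homogeneous free subalgebra $K\left<x,y\right>\subseteq A_X$. Applying $i$ then exhibits $i\bigl(K\left<x,y\right>\bigr)$ as a noncommutative free subalgebra of $K[\mathfrak{G}_X]$, completing the argument.

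The one point requiring care is the scope of the hypotheses: the embedding $i$ and Theorem~\ref{convo} are set up for a transitive aperiodic subshift $X$, which is the standing assumption under which $K[\mathfrak{G}_X]$ is defined as $K\left<1_{x_1},\dots,1_{x_d},T^{\pm1}\right>$ in \S\ref{sec:convo}. I therefore expect the main (and essentially only) obstacle to be confirming that the corollary is read in this regime: once $X$ is transitive and aperiodic the reduction above is immediate, whereas for subshifts with periodic points the groupoid acquires isotropy and the clean localization picture $A_X\hookrightarrow K[\mathfrak{G}_X]$ would have to be re-examined separately.
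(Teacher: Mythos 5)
Your proposal is correct and follows essentially the same route as the paper, whose entire proof is the citation of Theorem~\ref{convo} and Theorem~\ref{no free amenable}: one takes the contrapositive of the first assertion of Theorem~\ref{convo} to push non-(exhaustive amenability) down to $A_X$, applies Theorem~\ref{no free amenable} to obtain a free subalgebra of $A_X$, and transports it into $K[\mathfrak{G}_X]$ via the embedding $i\colon A_X\hookrightarrow K[\mathfrak{G}_X]$. Your added observations---that the involution $T^*=T^{-1}$, $1_{x_i}^*=1_{x_i}$ makes the hypothesis left--right unambiguous, and that the statement is to be read under the standing transitive aperiodic assumption of \S\ref{sec:convo}---are consistent with how the paper intends the corollary.
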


\begin{proof}
This follows from Theorem \ref{convo} and Theorem \ref{no free amenable}.
\end{proof}

\subsection{A monomial algebra of exponential growth with no free subalgebras}

Let: 
\begin{eqnarray*}
S & = & \bigcup_{k=1}^{\infty} \{10^kn+k-1\ |\ n=1,2,\dots\} \\ & = & \{10,20,\dots, 100, 101, 110,\dots, 1000, 1001,1002, 1010,\dots\}.
\end{eqnarray*}
And construct a monomial algebra as follows. Throughout this section, we let: \begin{equation}
\label{eq:AA}
A=K\left<x_1,x_2,x_3\right>/I,
\end{equation} where $I$ is generated by all monomials which are not factors of any infinite word of the form:
\begin{equation}
\label{eq:AI} x_{\epsilon_1} x_{\epsilon_2} x_{\epsilon_3} \cdots \ \ \ \text{such that}\ \epsilon_i=1\iff i\in S. \end{equation}
Hence $A$ is spanned by the set of monomials which appear as factors of at least one of the aforementioned infinite words.  Throughout this section we let $T$ denote this set of monomials and we let $T(n)$ denote the set of length-$n$ monomials from $T$.

\begin{lem} \label{Growth_lemma}
Let $f\colon \mathbb{N} \rightarrow \mathbb{N}$ be a non-decreasing function.
Let $(a_n)_{n=1}^{\infty}$ be a non-decreasing, unbounded sequence of natural numbers such that $a_{n+1}\leq Ca_n$ for all $n\in \mathbb{N}$ and some $C>1$. If $f(a_n)\geq \alpha^{a_n}$ for some $\alpha>1$ and all $n\in \mathbb{N}$, then $f(n)\geq \beta^n$ for $\beta=\alpha^{1/C}>1$ and all $n\geq a_1$.
\end{lem}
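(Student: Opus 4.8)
The plan is to show that the hypothesis $f(a_n) \geq \alpha^{a_n}$, which controls $f$ only along the sparse sequence $(a_n)$, propagates to a (slightly weaker) exponential lower bound at every integer $n \geq a_1$. The key structural fact is that the $a_n$ are not too sparse: the bounded-ratio condition $a_{n+1} \leq C a_n$ guarantees that consecutive sample points are at most a multiplicative factor $C$ apart, so no integer $n \geq a_1$ is too far (multiplicatively) from some sample point below it.

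First I would fix $n \geq a_1$. Since $(a_n)$ is non-decreasing, unbounded, and starts at $a_1 \leq n$, there is a largest index $k$ with $a_k \leq n$; thus $a_k \leq n < a_{k+1}$. Using monotonicity of $f$ together with $a_k \leq n$ gives
\begin{equation}
f(n) \geq f(a_k) \geq \alpha^{a_k}.
\end{equation}
The second step is to bound $a_k$ from below in terms of $n$. From $n < a_{k+1}$ and the ratio bound $a_{k+1} \leq C a_k$ we obtain $n < C a_k$, hence $a_k > n/C$. Substituting this into the previous display yields
\begin{equation}
f(n) \geq \alpha^{a_k} > \alpha^{n/C} = \left(\alpha^{1/C}\right)^{n} = \beta^{n},
\end{equation}
where $\beta = \alpha^{1/C}$, and $\beta > 1$ because $\alpha > 1$ and $C > 0$. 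This is exactly the claimed inequality for all $n \geq a_1$.

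The only subtle point — the part I would be most careful about — is the existence of the index $k$ for every $n \geq a_1$ and the handling of the boundary: when $n = a_1$ one takes $k = 1$, and unboundedness of $(a_n)$ ensures the set $\{k : a_k \leq n\}$ is finite and non-empty so a largest index genuinely exists. Everything else is a direct chain of inequalities from monotonicity of $f$ and the geometric-spacing hypothesis; no obstacle beyond bookkeeping arises, since the whole argument is a single interpolation estimate rather than anything requiring induction or a delicate limiting argument.
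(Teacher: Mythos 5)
Your proof is correct and is essentially identical to the paper's: both take the largest index $k$ with $a_k \leq n$, use monotonicity of $f$ to get $f(n) \geq f(a_k) \geq \alpha^{a_k}$, and then use $n < a_{k+1} \leq C a_k$ to conclude $\alpha^{a_k} \geq \alpha^{n/C} = \beta^n$. Your extra care about the existence of $k$ (via unboundedness of $(a_n)$) is a fine, if routine, addition.
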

\begin{proof}
Let $f,(a_n)_{n=1}^{\infty},\alpha,C,\beta$ be as in the statement.
Given $n\geq a_1$, take $k$ to be maximum integer such that $a_k\leq n$. Notice that by assumption, $n<a_{k+1}\leq Ca_k$. Hence:
$$ f(n)\geq f(a_k) \geq \alpha^{a_k}\geq \alpha^{n/C}=\beta^n $$
as claimed.
\end{proof}

\begin{prop} \label{Example_10} Let $A$ be as in Equations (\ref{eq:AA}) and (\ref{eq:AI}). Then $A$ is a monomial algebra of exponential growth that contains no noncommutative free subalgebras.
\end{prop}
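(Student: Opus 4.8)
The plan is to establish two separate claims: that $A$ has exponential growth, and that $A$ contains no noncommutative free subalgebra. These are essentially independent arguments, so I would treat them in turn.

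\medskip

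\textbf{Exponential growth.} The set $S$ records exactly the positions where the letter $x_1$ is forced to appear in the defining infinite words. The blocks $\{10^k n + k - 1 : n \geq 1\}$ become extremely sparse as $k$ grows: consecutive forced-$x_1$ positions within the $k$-th block are spaced $10^k$ apart. In the long gaps between forced positions, positions may be filled by either $x_2$ or $x_3$ freely (subject only to avoiding the forbidden letter $x_1$ there). So first I would identify, for a suitable sequence of lengths $a_n$, a long factor window inside a single gap of length on the order of $10^k$ in which every interior position is genuinely free to be $x_2$ or $x_3$; this gives roughly $2^{(\text{gap length})}$ distinct factors in $T$ of that length, hence $\#T(a_n) \geq \alpha^{a_n}$ for some $\alpha > 1$ along the subsequence $a_n$. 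The point of \Lref{Growth_lemma} is precisely to upgrade a lower bound that holds only along a multiplicatively-bounded subsequence $a_{n+1} \leq C a_n$ to a genuine exponential lower bound $\beta^n$ for all large $n$. So the key arithmetic check is that the lengths $a_n$ witnessing the free windows satisfy $a_{n+1} \leq C a_n$ for a fixed constant $C$; since the gap lengths scale geometrically like $10^k$, one expects such a $C$ to exist, and then growth is exponential by the lemma.

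\medskip

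\textbf{No free subalgebra.} Here I would exploit the \emph{self-similarity} forced by the arithmetic structure of $S$: because the forced positions become arbitrarily sparse, any factor of $T$ of bounded length, once it lands sufficiently far out in a large gap, looks like an unconstrained word over $\{x_2, x_3\}$, and the combinatorics of where $x_1$ may legitimately occur is highly rigid. The strategy is to show that any two homogeneous elements $p, q \in A$ (equivalently, any finitely generated homogeneous subalgebra) satisfy a nontrivial relation, by a counting/pigeonhole argument: the number of length-$n$ monomials in a homogeneous free subalgebra on two generators would grow like $2^{n/D}$ for the common degree $D$, i.e.\ genuinely exponentially \emph{in the number of monomials that can simultaneously be nonzero products}, and I would argue this outpaces the structural constraints that $T$ imposes on which concatenations survive. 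Concretely, I expect to show that whatever homogeneous generators one picks, the forced appearances of $x_1$ at the rigid positions in $S$ eventually force distinct abstract monomials in the would-be free generators to collapse to the same monomial in $T$ (or to $0$), yielding a relation.

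\medskip

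\textbf{Main obstacle.} The harder half is the absence of free subalgebras: exponential growth is a direct window-counting estimate feeding into \Lref{Growth_lemma}, whereas ruling out free subalgebras requires controlling \emph{all} pairs of homogeneous elements at once and showing the rigidity of $S$ forces collapse. The delicate point is that free generators need not be the letters themselves but arbitrary homogeneous combinations, so I must track how the sparse, deterministic placement of $x_1$ interacts with arbitrary supported monomials; the technical heart will be a pigeonhole argument showing that the growth rate of an honestly free subalgebra is incompatible with the growth rate of $T$ restricted to the relevant residue/position structure, or equivalently that the positions forced by $S$ eventually synchronize two distinct words and produce a nontrivial relation.
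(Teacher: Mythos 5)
Your proposal has a genuine gap in each half. For the growth estimate, your mechanism rests on a misreading of $S$: you propose to find windows of length on the order of $10^k$ lying \emph{inside a single gap} of $S$, in which every position is free to be $x_2$ or $x_3$. No such windows exist. Since $S$ contains the entire $k=1$ block $\{10,20,30,\dots\}$, every tenth position of the defining words is forced to be $x_1$, so gaps between consecutive elements of $S$ have length at most $9$; gap-counting therefore yields at most $2^9$ factors per window, never exponentially many. The correct count is a \emph{density} count over a whole prefix: with $a_n = 10^n-1$, the number of forced positions in $[1,a_n]$ is at most $a_n(1/10+1/100+\cdots)=a_n/9$, so at least $\tfrac{8}{9}a_n$ positions are free, giving $|T(a_n)|\geq (2^{8/9})^{a_n}$, and then Lemma~\ref{Growth_lemma} (which you invoke correctly, with the check $a_{n+1}\leq 11a_n$) upgrades this to a bound for all large $n$. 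This error is repairable, and the repaired skeleton matches the paper's.

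The second half is the more serious problem: your pigeonhole/growth-comparison strategy cannot work in principle, precisely because of the first half. $A$ has exponential growth $|T(n)|\geq \beta^n$, while a free subalgebra on two homogeneous generators of common degree $D$ only forces about $2^{n/D}$ monomials in degree $n$; once $D$ is large enough that $2^{1/D}<\beta$, there is no counting obstruction whatsoever. Indeed, the entire point of this proposition in the paper is that exponential growth is compatible with the absence of free subalgebras, so no argument that compares growth rates can succeed. (You also restrict attention to homogeneous generators, whereas the statement rules out arbitrary free subalgebras.) The paper's argument is structural rather than counting-based: every nonzero monomial of length $2\cdot 10^k$ contains a block of positions $\{n10^k, n10^k+1,\dots,n10^k+(k-1)\}\subseteq S$ and hence contains $x_1^k$ as a factor; consequently any finitely generated subalgebra of the ideal $\langle x_2,x_3\rangle$ is nilpotent (take $k$ large compared to the lengths of its generating monomials), i.e.\ $\langle x_2,x_3\rangle$ is locally nilpotent. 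Since $A/\langle x_2,x_3\rangle\cong K[x_1]$ is commutative, for any $a,b\in A$ the commutator $[a,b]$ lies in $\langle x_2,x_3\rangle$ and is therefore nilpotent, which is impossible if $a,b$ generate a free subalgebra. Your vague ``synchronization/collapse'' idea contains no workable mechanism for producing a relation; the missing idea is exactly this local nilpotence of $\langle x_2,x_3\rangle$ combined with commutativity of the quotient.
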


\begin{proof}
First, to analyze the growth of $A$, we have to show that $|T(n)|\geq \beta^n$ for some $\beta>1$ and $n\gg 1$.

Let $a_n=10^n-1$. 
It is easy to see that $a_{n+1}\leq 11a_n$ for all $n\in \mathbb{N}$.
Consider all length-$a_n$ words $x_{\epsilon_1} x_{\epsilon_2} \cdots x_{\epsilon_{a_n}}$ such that $\epsilon_i=1\iff i\in S$.
The number of indices $1\leq i \leq a_n$ which belong to $S$ is at most:
$$ |S\cap [1,a_n]| \leq a_n \left( \frac{1}{10}+\frac{1}{100}+\cdots \right) = \frac{10^n-1}{9}.$$
Hence, $T(a_n)$ contains at least $2^{a_n-\frac{10^n-1}{9}}=(2^{8/9})^{a_n}$ words (substituting $x_2,x_3$ for each index not participating in $S$). Thus by Lemma \ref{Growth_lemma}, the sequence $|T(n)|$ grows exponentially.

We next claim that the ideal $\left<x_2,x_3\right>\triangleleft A$ is locally nilpotent. Indeed, let $u\in A$ be a non-zero monomial of length $|u| = 2\cdot 10^k$. Then: $$ u=x_{\epsilon_{d+1}}x_{\epsilon_{d+2}}\cdots x_{\epsilon_{d+2\cdot 10^k}} $$
for some $d \geq 0$. But $\{d+1,d+2,\dots, d+2\cdot 10^k\}$ contains a subset of the form: $$ \{n\cdot 10^k,n\cdot 10^k+1,\dots, n\cdot 10^k+(k-1)\}, $$ so $u$ has a factor of the form $x_1^k$. 
Let $B\subseteq \left<x_2,x_3\right>$ be a finitely generated (non-unital) subalgebra of the ideal generated by $x_2,x_3$ in $A$. By enlarging $B$ if necessary, we may assume that it is generated by a finite number of monomials each containing an $x_2$ or an $x_3$. Let $k$ be greater than twice the maximum length of all of these generating monomials; it follows that every monomial from $B$ of length at least $2\cdot 10^k$ vanishes, since otherwise it must contain $x_1^k$ as a factor, which implies that at least one of the monomials generating $B$ is a power of $x_1$, a contradiction. Therefore $B$ is nilpotent.

Now assume to the contrary that $A$ contains a noncommutative free subalgebra on two generators $a,b$. Then $[a,b]=ab-ba$ vanishes modulo $\left<x_2,x_3\right>$, as $A/\left<x_2,x_3\right>\cong K[x_1]$ is commutative, so $[a,b]\in \left<x_2,x_3\right>$ and consequently it is nilpotent, contradicting the assumption that $K\left<a,b\right>$ is free.
\end{proof}

\section*{Acknowledgement}

We thank the anonymous referee for many helpful comments and suggestions and in particular for improvements to Theorem \ref{convo}.

\end{document}